\title{On conformally flat circle bundles over surfaces}
\author{Son Lam Ho\\
  \small Mathematics Research Unit\\
  \small University of Luxembourg\\
  \small Luxembourg
}
\begin{document}

\newtheorem{definition}{Definition}[section]
\newtheorem{theorem}[definition]{Theorem}
\newtheorem{lemma}[definition]{Lemma}
\newtheorem{question}[definition]{Question}
\newtheorem{corollary}[definition]{Corollary}
\newtheorem{proposition}[definition]{Proposition}
\newtheorem{claim}[definition]{Claim}
\newtheorem{remark}[definition]{Remark}
\newtheorem{conjecture}[definition]{Conjecture}
\newtheorem*{maintheorem}{Main Theorem}
\theoremstyle{remark}
\newtheorem{example}{Example}
\newcommand{\Mob}{\textnormal{M\"ob}}
\newcommand{\Isom}{\textnormal{Isom}}








\maketitle

\section{Introduction}
\label{secIntro}
Let $E$ be the total space of a circle bundle over a closed surface $\Sigma_g$ of genus $g$ with $g\ge 2$. If we fix $g$, the topological type of $E$ is classified by its Euler number, denoted by $e(E)$.  Gromov, Lawson, and Thurston \cite{GLT}, Kuiper \cite{Kuiper88}, and Kapovich \cite{Kapovich89Flat} constructed examples of flat conformal $(SO(4,1), S^3)$ structures on circle bundles $E$ with non-zero Euler number. These examples are constructed using fundamental domains in $S^3$ that are bounded by a ``necklace" of 2-spheres which are arranged along an unknotted embedding of a circle. It's interesting that the arrangement of these spheres determines the topology of $E$.


All constructed examples of flat conformal structures on $E$ satisfy the inequality
\begin{equation}\label{MW}
|e(E)|\le |\chi(\Sigma_g)| = 2g - 2.
\end{equation}
It is conjectured in \cite{GLT} that this inequality is a necessary condition for the existence of a flat conformal structure on $E$. We will refer to this as the GLT conjecture. If it is true, it would be an example of the general principle that existence of geometry on a manifold often restricts its topology.

The case when $g=1$ was established by Goldman in \cite{Goldman83Conf}. In this article we consider the case $g\ge 2$, in particular the case where the conformally flat manifold $E$ is a quotient of the domain of discontinuity of a surface group $\Gamma$. Partial results are obtained in some nice cases, one of them is: $e(E)<\frac{3}{2}n^2$ for structures with a certain type of fundamental domains of $n$ faces. These soft bounds are presented in section \ref{secEulernumber}.

Representations of fundamental groups of closed surfaces (surface groups) into a Lie group $G$ is a well-studied subject. This is especially true when $G$ is $\Isom(\mathbb{H}^2)$ and $\Isom(\mathbb{H}^3)$. Here we study the natural higher dimensional analogue: surface groups in $G=\Isom(\mathbb{H}^4)$. Just as in lower dimension cases, we want to study the space of ``nice" surface groups called quasi-Fuchsian groups. Following \cite{Kapovich07Klein} we define:
\begin{definition}
A surface group in $\Isom(\mathbb{H}^n)$ (for $n\ge 2$) is \emph{quasi-Fuchsian} if its limit set is a topological circle in $\partial_\infty (\mathbb{H}^n) = S^{n-1}$.
\end{definition}
Indeed, all known examples of conformally flat circle bundles are constructed as quotients of a domain in $S^3$ by a quasi-Fuchsian surface group which admits a finite sided fundamental domain. In non-trivial cases (where $e(E)\ne 0$,) the limit set of one of these group is a fractal topological circle in $S^3$ with torsion.

The article is organized as follows. In section 2, we present background material in flat conformal geometry. In section \ref{SecCircleBundles} we describe the combinatorial construction of a circle bundle, section \ref{SecFundomain} and \ref{SecDeform} discuss examples old and new, and an algorithm to compute the Euler number using the fundamental domain. In section \ref{secEulernumber} we show two approaches to bound the Euler number of a conformally flat circle bundle in nice cases.

\bigskip

\noindent {\bf Acknowledgements.} This article is part of the author's Ph.D. thesis completed at University of Maryland, College Park. I'd like to thank my Ph.D. advisor, professor Bill Goldman, for introducing me to this mathematical area, and for countless number of enlightening conversations. I also thank him for his patient guidance, generosity and friendship in many years. I'm grateful to professor Feng Luo and professor Karin Melnick for their helpful comments and suggestions, and to professor Jean-Marc Schlenker and University of Luxembourg for their support. I'd like to acknowledge support from U.S. National Science Foundation grants DMS 1107452, 1107263, 1107367 ``NMS: Geometric Structures and Representation Varieties'' (the GEAR Network).

\section{Preliminaries on flat conformal geometry}
\label{secPrelim}

Let us recall some basic concepts and notations. Let $(X,g)$ be a  Riemannian manifold. Isometries from $X$ to itself are maps preserve the Riemannian metric $g$. These maps form a group which we call $\textnormal{Isom}(X)$, the group of isometries on $X$. The group of orientation preserving isometries is denoted $\textnormal{Isom}^+(X)$.

Two Riemannian metrics $g,h$ on $X$ are conformally equivalent if there is a positive function $\lambda$ on $X$ such that $g_x(u,v)=\lambda(x) h_x(u,v)$ on each tangent space. A class of conformally equivalent metrics on $X$ is called a \emph{conformal structure}. Given two Riemannian manifolds $(X,g)$ and $(Y,h)$, a local diffeomorphism $X\rightarrow Y$ is called a \emph{conformal map} if the pull back metric $h^*$ on $X$ is conformally equivalent to $g$.

For the sphere $S^{n}$, the conformal maps from $S^n$ to itself form a group which we will name $\Mob(S^n)$.

We will now introduce the hyperbolic space and its conformal sphere boundary at infinity. More details can be found in \cite{Ratcliffe94Book}. For $m\ge 2$, we let $\mathbb{R}^{m,1}$ be  $\mathbb{R}^{m+1}$ with a Lorentzian metric of signature $(m,1)$. That is, in metric is given by the quadratic form $B(x)= - x_0^2 + x_1^2 + ... + x_m^2.$ Let $SO(m,1)$ be the group of $(m+1)\times (m+1)$ matrices of determinant $1$ preserving this quadratic form.

The hyperbolic $m$ space, denoted $\mathbb{H}^m$ is defined to be the level set $$H:=\{x\in\mathbb{R}^{m,1}|B(x)=-1\}\subset\mathbb{R}^{m,1}$$ with the inherited metric on each tangent space. This is the hyperboloid model of the hyperbolic space. The hyperboloid $H$ is asymptotic to the light-cone $$L:=\{x\in\mathbb{R}^{m,1}|B(x)= -x_0^2 + \sum_{i=1}^m x_i^2 = 0\}\subset\mathbb{R}^{m,1}.$$ The projectivization of $L$ identifies with $\{x\in\mathbb{R}^{m,1}|\sum_{i=1}^m x_i^2 = 1, x_0= 1\} = S^{m-1}$. This is a sphere of dimension $m-1$ and it is naturally the boundary at infinity of hyperbolic space $\mathbb{H}^{m}$. Unless otherwise noted we give $S^{m-1}$ the standard unit sphere metric inherited from $\mathbb{R}^m$.

It can be shown that $SO(m,1)$ acts on $\mathbb{H}^m$ by hyperbolic isometries, and on $S^{m-1}$ by conformal automorphisms. That is, $\Mob(S^{m-1}) = \textnormal{Isom}(\mathbb{H}^{m}) = SO(m,1)$.

An element $A\in \Mob^+(S^{m-1})$ can be classified by the dynamics of its action on $\mathbb{H}^{m}\cup S^{m-1}$:
\begin{enumerate}\itemsep0pt \parskip0pt \parsep0pt
\item Loxodromic: this is when $A$ has two fixed points (one attracting and one repelling) on $S^{m-1}$, and it leaves invariant a geodesic in $\mathbb{H}^{m}$ whose ideal end points are the fixed points of $A$. All loxodromic actions on $S^{m-1} = \mathbb{R}^{m-1} \cup \{\infty\}$ are conjugate to $x \mapsto R(\lambda x)$ for $x\in\mathbb{R}^{m-1}$, $1\ne\lambda\in\mathbb{R}$ a scalar, and $R$ a matrix in $SO({m-1})$ representing a rotation.
\item Parabolic: this is when $A$ has exactly one fixed point on $S^{m-1}$. We can think of a parabolic element as a limit of loxodromic elements when the two fixed points come together. All parabolic actions on $S^{m-1} = \mathbb{R}^{m-1} \cup\{ \infty\}$ are conjugate to $x \mapsto Rx + t$ for $R\in SO({m-1})$ a rotation matrix, and $t\in\mathbb{R}^{m-1}$ a non-zero vector of translations.
\item Elliptic: this is when $A$ has one or more fixed point(s) in $\mathbb{H}^{m}$. In this case, the differential of the action of $A$ at the fixed point $p\in\mathbb{H}^{m}$ can be represented as a matrix in $SO(m)$.
\end{enumerate}
(See \cite{Kapovich07Klein}, \cite{Kapovich01Book}, and \cite{Ratcliffe94Book} for more details on this classification.) One can easily show that loxodromic elements form an open set in $\Mob^+(S^{m-1})$ using Brouwer fixed point theorem.

The following are well known facts in lower dimensions. For $n=1$, we have $\Mob^+(S^1) = \textnormal{Isom}^+(\mathbb{H}^{2})\cong PSL(2,\mathbb{R})$ the group of projectivized $2\times 2$ real matrices of determinant 1. For $n=2$, we have $\Mob^+(\mathbb{C}P^1) = \textnormal{Isom}^+(\mathbb{H}^{3})\cong PSL(2,\mathbb{C})$ the group of projectivized $2\times 2$ complex matrices of determinant 1.

The basic objects in conformal geometry are subspheres inside $S^{m-1}$. These objects arise naturally as the boundaries of totally geodesic hyperbolic subspaces inside $\mathbb{H}^{m}$. Under actions of elements of $\Mob(S^{m-1})$, one subsphere must be mapped to another of the same dimension, and given two subspheres of the same dimension in $S^{m-1}$ there always exist conformal transformations taking one to the other. Moreover, under a stereographic projection 
$(S^{m-1} -\infty) \rightarrow \mathbb{R}^{m-1}$ these subspheres are mapped to Euclidean subspheres and in $\mathbb{R}^{m-1}$. We now reserve the words ``circle'' and ``sphere'' only for these natural geometric objects in $S^{m-1}$. From this point we restrict our attention to the case $m=4$. In all figures, $S^3$ will be presented as its stereographic projection model $\mathbb{R}^3 \cup \{\infty\}$.

\begin{definition}
A loxodromic transformation $A\in \Mob^+(S^3)$ is said to be \emph{non-rotating} if up to a conjugation there is a $\lambda\in\mathbb{R}+$ so that $A(x) = \lambda x$ for all $x\in\mathbb{R}^3 = S^3 - \infty$. Otherwise we say that $A$ is \emph{rotating}.
\end{definition}

\subsection{Further classifications of elliptic transformations}

Let $A\in\Mob^+(S^3)$ be an elliptic transformation. We say $A$ is \emph{regular elliptic} if it fixes exactly one point in $\mathbb{H}^4$, thus it has no fixed points in $S^3$. For example, if we let $S^3 = \{(z,w)\in\mathbb{C}^2 : |z|^2 + |w|^2 = 1 \}$, then the transformation $(z,w)\mapsto (e^{i\theta}z, e^{i\psi}w) $ for $\theta,\psi \ne k2\pi$ would be a regular elliptic transformation.

If $A$ is not regular elliptic, that is, it fixes two points in $\mathbb{H}^4$, then $A$ fixes the geodesic connecting those two points. So the differential of $A$ at a fixed point can be represented as 
$d_pA \sim \left(\begin{array}{cc}
1 & 0 \\
0 & R
\end{array}\right)
$
with $R\in SO(3)$. But any $SO(3)$ rotation has a fixed axis, so
$$d_pA \sim \left(\begin{array}{ccc}
1 & 0 & 0 \\
0 & 1 & 0 \\
0 & 0 & T
\end{array}\right)
$$
with $T\in SO(2)$. Thus if $A$ is not regular elliptic, then A fixes a totally geodesic plane inside $\mathbb{H}^4$ along with its ideal boundary: a circle in $S^3$. In this case, we say that $A$ is \emph{singular-elliptic}.

This is an important class of M\" obius transformations. The centralizer of almost every 1-parameter subgroup of $\Mob^+(S^3)$ contains a singular-elliptic subgroup. (1-parameter subgroups generated by certain parabolic elements may have trivial centralizer.)
Also, among all 1-parameter subgroups of $\Mob^+(S^3)$, the singular-elliptic ones have the largest centralizers, which can be shown to be isomorphic to $SO(2)\times PSL(2,\mathbb{R})$.

\subsection{M\"obius Annulus}
\label{subsecMobAnnu}
Let $C$ be a circle in $S^3$. We denote Fix$(C)$ the subgroup of $\Mob^+(S^3)$ that fixes every point on $C$. Then Fix$(C)$ contains only singular-elliptic elements, and it is isomorphic to $SO(2)$. Its action on $S^3$ is rotation around the axis $C$.

Consider Inv$^+(C)$, the subgroup of $\Mob^+(S^3)$ that leaves a circle $C$ invariant and preserve its orientation. That is, elements of Inv$^+(C)$ mapping $C$ and its orientation to itself. Clearly Fix$(C)\hookrightarrow$Inv$^+(C)$ a normal subgroup. Let $\Mob^+(C)$ be the group of orientation preserving conformal automorphisms of $C$. Then Inv$^+(C)\rightarrow \Mob^+(C) \cong \Isom^+(\mathbb{H}^2)\cong PSL(2,\mathbb{R})$ is a surjective map defined by restricting the elements of Inv$^+(C)$ to act on $C$. So we have an exact sequence:

$$0\rightarrow\textnormal{ Fix}(C)\hookrightarrow\textnormal{ Inv}^+(C) \rightarrow\Mob^+(C)\rightarrow 0.$$
In fact there is a splitting $\Mob^+(C)\hookrightarrow\textnormal{Inv}^+(C)$ and the image commutes with $\textnormal{ Fix}(C)$ . So we have $\textnormal{Inv}^+(C)\cong\textnormal{Fix}(C)\times\Mob^+(C) \cong SO(2)\times PSL(2,\mathbb{R})$.

Now let $S$ be a 2-sphere in $S^3$. We would still have a similar exact sequence like above. And since Fix$(S)$ is the trivial group in $\Mob^+(S^3)$, we get Inv$^+(S) \cong\Mob^+(S)\cong PSL(2,\mathbb{C})$. We can then think of a 2-sphere in $S^3$ as a copy of $\mathbb{C}P^1$ with $PSL(2,\mathbb{C})$ acting on it.

Let $H$ be a connected subset of a 2-sphere $S$ with boundary being a circle, that is, $H$ is a half sphere. Then Inv$^+(H)$ is a subset of Inv$^+(S) \cong PSL(2,\mathbb{C})$ that fixes a half-sphere. So Inv$^+(H)\cong PSL(2,\mathbb{R})$ the isometry group of $\mathbb{H}^2$. We can then think of $H$ as a copy of the hyperbolic plane.

An important object to study is defined below:
\begin{definition}
A \emph{M\"obius annulus}  is a 2-sphere minus two disjoint half-spheres.
\end{definition}

Let $\mathcal{A}$ be a M\"obius annulus, and denote Inv$^*(\mathcal{A})$ the subgroup of $\Mob^+(S^3)$ that leaves $\mathcal{A}$ and its two boundary components invariant. (Elements of this group will not swap the two boundary components of $\mathcal{A}$.) Let $\partial_1 \mathcal{A}$ and $\partial_2 \mathcal{A}$ be the two boundary components which are both circles. Suppose that half-sphere $H$ contains $\mathcal{A}$  with $\partial H = \partial_1 \mathcal{A}$. Then we have $\textnormal{Inv}^*(\mathcal{A})\subset\textnormal{Inv}^+(H)\cong PSL(2,\mathbb{R})$, and $\textnormal{Inv}^*(\mathcal{A})$ preserve a disk which is the complement of $\mathcal{A}$ in $H$. So 
$$\textnormal{Inv}^*(\mathcal{A})\cong SO(2)$$ a subgroup of elliptic elements in $ PSL(2,\mathbb{R})$.

Consider a M\"obius annulus in the plane given by $$\mathcal{A}_l = \{z\in\mathbb{C}\ |\  \frac{1}{e^l}<|z|<1\} \ \ \textnormal{ for } l>0$$ which is considered to be a subset of the Poincare unit disk model of the hyperbolic plane. If $0<l_1<l_2$ then $\mathcal{A}_{l_1}$ is not equivalent to $\mathcal{A}_{l_2}$. This is because conformal automorphism of the unit disk cannot change hyperbolic area.

We now claim that every M\"obius annulus in $S^3$ is conformally equivalent to $\mathcal{A}_l$ for some $l>0$. Given any M\"obius annulus $\mathcal{A}$, it is contained in a halfsphere $H$ such that $\partial H = \partial_1 \mathcal{A}$. Since $H$ can be mapped conformally to the unit disk, this map takes $\partial_1 \mathcal{A}$ to $\{|z|=1\}$ and takes the other boundary component $\partial_2 \mathcal{A}$ to some circle inside the unit disk. A composition with some element of Inv$^+(H)$ will take $\partial_2 \mathcal{A}$ to a circle centered at $0$ with radius $1/e^l$. So $\mathcal{A}$ is equivalent to $\mathcal{A}_l$ for some $l>0$. Thus we have the following:

\begin{remark}
\label{annuliInvariant}
There is a one parameter invariant given by $l>0$ for M\"obius annulus $\mathcal{A}$. We denote this by $\textnormal{mod}(\mathcal{A})$ and we call it the \emph{modulus} of $A$.
\end{remark}

\begin{figure}[h]
\centering
\includegraphics[width=9cm]{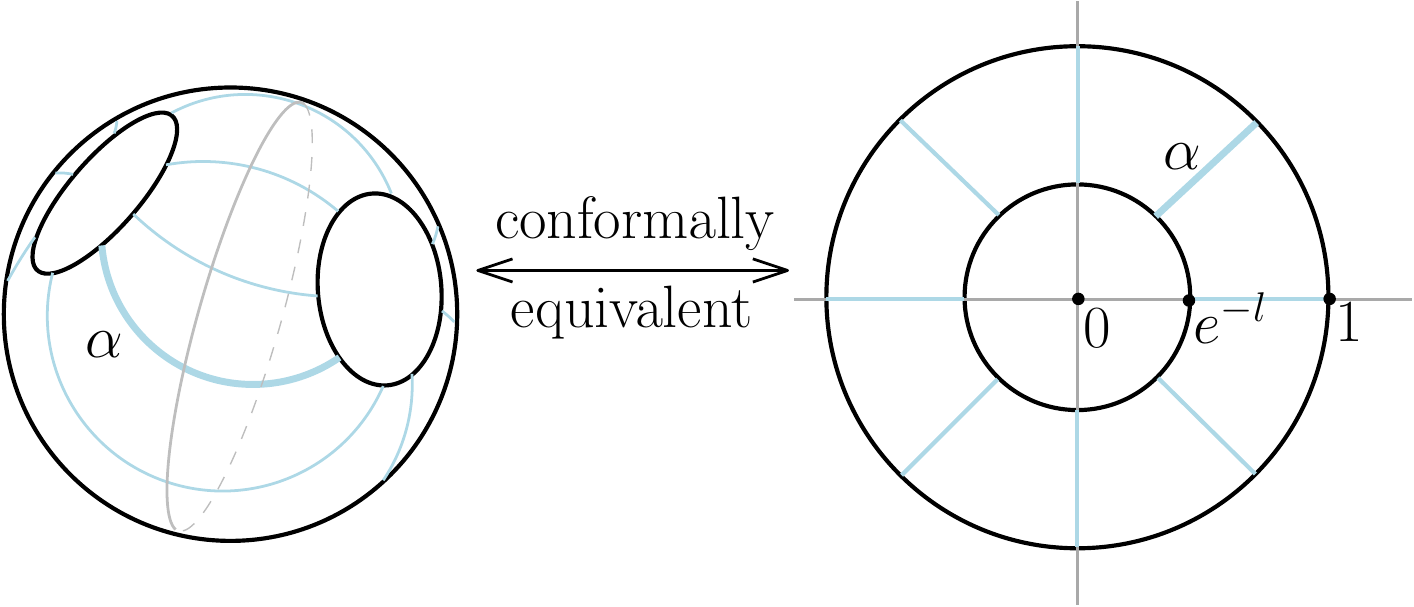}
\caption{Conformally equivalent M\"obius annuli.}
\label{MobiusAnnulus}
\end{figure}

\begin{figure}[h]
\centering
\includegraphics[width=11cm]{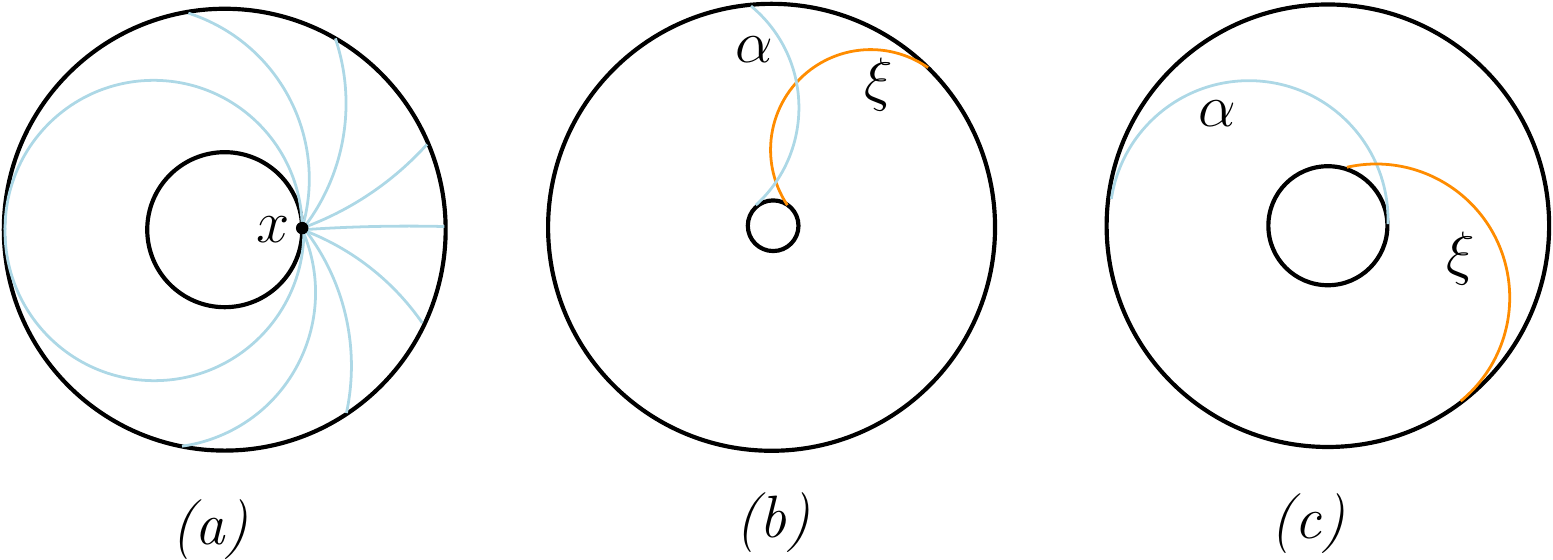}
\caption{\emph{(a)} circle arcs in $\mathcal{A}$ connecting $x$ to every point in the other boundary component, \emph{(b)} circle arcs $\alpha,\xi$ with $i(\alpha,\xi)=0$ and \emph{(c)} with $i(\alpha,\xi)=1$.}
\label{RemarkJustifications}
\end{figure}

\begin{definition}
Let $\mathcal{A}$ be a M\"obius annulus and $x$ and $y$ be two points on the boundary components $\partial_1\mathcal{A}$ and $\partial_2\mathcal{A}$ respectively. We say $x, y$ is a \emph{singular pair} on the boundary of $\mathcal{A}$ if there is a non-contractible circle $C$ subset of the closure $cl({\mathcal{A}})$ so that $C$ contains both $x$ and $y$.
\end{definition}
Note that each point on $\partial\mathcal{A}$ belongs to a unique singular pair.

\begin{lemma}
\label{remLem1}
(See Figure \ref{RemarkJustifications}.) We call a connected segment of a circle a \emph{circle arc}. Let $\partial_1 \mathcal{A}$ and $\partial_2 \mathcal{A}$ be the boundary components of a M\"obius annulus $\mathcal{A}$. Then:
\begin{enumerate}
\item Every point from $\partial_1 \mathcal{A}$ can be connected to any point on $\partial_2 \mathcal{A}$ by a circle arc on $\mathcal{A}$;
\item if $x,y$ are a singular pair on $\partial\mathcal{A}$, then there are two circle arcs not in the same homopoty class in $cl(\mathcal{A})$ connecting $x$ and $y$.
\item if $x,y$ are on two different components of $\partial\mathcal{A}$ but are not a singular pair, then all circle arcs connecting $x$ and $y$ belong to the same homotopy class (rel. endpoints) in $cl(\mathcal{A})$.
\end{enumerate}
\end{lemma}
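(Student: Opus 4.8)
The plan is to work in the standard model $\mathcal{A}_l=\{z:e^{-l}\le |z|\le 1\}$, to which every M\"obius annulus is conformally equivalent by Remark \ref{annuliInvariant}; write $r=e^{-l}$, so that $\partial_1\mathcal{A}=\{|z|=1\}$ and $\partial_2\mathcal{A}=\{|z|=r\}$. Circle arcs are arcs of Euclidean circles (a full line is unbounded, so it cannot lie in the bounded set $cl(\mathcal{A})$; only line segments occur, and I treat these together with proper circles). The organizing invariant will be the net change of angular coordinate $\Delta_\gamma\arg:=\int_\gamma d(\arg z)$ along an arc $\gamma$: since $cl(\mathcal{A})$ deformation retracts onto a circle, two arcs from $x\in\partial_1$ to $y\in\partial_2$ are homotopic rel endpoints iff they have equal $\Delta\arg$, and every realizable value lies in the coset $(\arg y-\arg x)+2\pi\mathbb{Z}$. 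First I would record the explicit description of singular pairs: the only non-contractible circles contained in $cl(\mathcal{A})$ meeting \emph{both} boundaries are the bitangent circles $C_\theta$ with center $\tfrac{1-r}{2}e^{i\theta}$ and radius $\tfrac{1+r}{2}$, each tangent to $\partial_1$ at $e^{i\theta}$ and to $\partial_2$ at $-re^{i\theta}$; thus the singular pair of $e^{i\theta}$ is $-re^{i\theta}$, which simultaneously proves the remark that every boundary point lies in a unique singular pair.

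The technical heart is the estimate $(\star)$: every circular arc $\gamma\subset cl(\mathcal{A})$ with one endpoint on each boundary circle satisfies $|\Delta_\gamma\arg|\le\pi$, with equality exactly for subarcs of the bitangent circles $C_\theta$. I would prove this by splitting on the position of the origin $O$ relative to the circle $C\supset\gamma$. If $O$ lies on or outside $C$, then $C$ is contained in a closed angular sector at $O$ of opening $2\arcsin(\rho/|c|)\le\pi$ (and $<\pi$ when $O\notin C$), so every arc has $|\Delta\arg|<\pi$; line segments are handled the same way. If $O$ lies strictly inside $C$ (then $c\ne O$, since a concentric circle has constant modulus and cannot meet both boundaries), a direct computation gives $\frac{d}{dt}\arg(c+\rho e^{it})>0$, so $\arg$ is strictly monotone along $C$; after rotating so that $c>0$ is real, $|z(t)|^2=|c|^2+\rho^2+2|c|\rho\cos t$ is unimodal, decreasing on $[0,\pi]$ and increasing on $[\pi,2\pi]$. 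Hence the portion of $C$ lying in $\{r\le |z|\le 1\}$ is a pair of arcs, one contained in $\{\arg\in[0,\pi]\}$ and its mirror image; a connected boundary-to-boundary arc lies in one of them, so its $\arg$-sweep is at most $\pi$, with equality forcing the tangencies $|c|+\rho=1$ and $\rho-|c|=r$, i.e. the bitangent circle.

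With $(\star)$ in hand, Claims 2 and 3 become bookkeeping on the coset $(\arg y-\arg x)+2\pi\mathbb{Z}$. Choosing the representative $\delta_0\in(-\pi,\pi]$, the only elements of the coset lying in $[-\pi,\pi]$ are $\delta_0$ alone when $\delta_0\in(-\pi,\pi)$, and the two values $\pm\pi$ when $\delta_0=\pi$. For $x,y$ on different components, the condition $\delta_0=\pi$ reads $\arg y=\arg x+\pi$ with $|x|=1$, $|y|=r$, that is $y=-re^{i\arg x}$, which is exactly the singular-pair condition. Thus if $x,y$ are not a singular pair, $(\star)$ forces every connecting arc to realize the single value $\delta_0$, so all are homotopic rel endpoints (Claim 3); if they are a singular pair, the two arcs of the bitangent circle $C_\theta$ realize $\Delta\arg=+\pi$ and $-\pi$, differing by a generator of $\pi_1$, hence are non-homotopic (Claim 2).

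For Claim 1 I would show that the set of values $\Delta\arg$ realized by arcs from a fixed $x\in\partial_1$ into the annulus and ending on $\partial_2$ is all of $[-\pi,\pi]$: the radial segment realizes $0$, the two bitangent arcs realize $\pm\pi$, and these are joined by a continuous path of admissible boundary-to-boundary arcs (e.g. sliding the center of the origin-enclosing circle tangent to $\partial_1$ at $x$), so by the intermediate value theorem every intermediate value is attained, while $(\star)$ bounds the range within $[-\pi,\pi]$; since an arc ending on $\partial_2$ with $\Delta\arg=\arg y-\arg x$ must terminate precisely at $y$, this produces the required arc to any $y\in\partial_2$. The main obstacle is the origin-inside case of $(\star)$: one must rule out a circular arc that wraps more than halfway around the hole, and the essential mechanism is that monotonicity of $\arg$ together with the unimodality of $|z(t)|$ confines any boundary-to-boundary arc to a single monotone half of the circle, with the full half-turn attained only in the tangential (bitangent) limit. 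Everything else reduces to winding-number bookkeeping through $(\star)$.
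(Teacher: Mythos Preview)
Your proof is correct and follows essentially the same approach as the paper: both work in the planar model, use the change-of-argument (winding number) as the organizing invariant, and establish the key bound $|\Delta_\gamma\arg|\le\pi$ (equivalently $|w(\gamma)|\le 1/2$) for boundary-to-boundary circular arcs, with equality only in the bitangent case. The paper phrases the bound via the observation that such an arc must lie in a single half of $C_\alpha$ between $z_{\min}$ and $z_{\max}$ (where $|z|$ is monotone), which is the same geometric content as your inside/outside case split and unimodality argument; your treatment is somewhat more explicit on the case analysis and on Claim~1, where the paper appeals to a figure rather than your intermediate-value argument.
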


\begin{proof}

As suggested in figure \ref{RemarkJustifications} we can find a family of circle arcs in $cl(\mathcal{A})$ connecting a point on $\partial_1 \mathcal{A}$ to any other point on $\partial_2 \mathcal{A}$. Without loss of generality, we assume the annulus $\mathcal{A}$ is given by $\mathcal{A}=\{z\in \mathbb{C}\ |\  e^{-l} < |z| < 1 \}$. Let $\alpha$ be an oriented circle arcs on $\mathcal{A}$ connecting the two boundary components. Using polar decomposition, let $r:\mathbb{C}\rightarrow (0,\infty)$ be the function giving the distance between 0 and a point on $\mathbb{C}$, and let $\theta$ be the multivalued function representing the argument of a complex number. We have $d\theta$ is a closed 1-form on $\mathbb{C}-\{0\}$. For any curve $\gamma$ not passing thru 0, the winding number of $\gamma$ is
$$w(\gamma) = \frac{1}{2\pi}\int_\gamma d\theta.$$

This allows us to define the winding number of a curve on a M\"obius annulus $\mathcal{A}$ because Inv$^*(\mathcal{A}) \cong SO(2)$ in the planar picture are Euclidean rotations around 0. This is well-defined up to an orientation on the annulus. Since $\alpha:[0,1]\rightarrow cl({\mathcal{A}})$ is a circle arc connecting two boundary components, $\alpha$ is part of a circle $C_\alpha\subset \mathbb{C}$ that's not centered at 0. So there is a point $z_{min}$ on $C_\alpha$ that's closest to $0$, and a point $z_{max}$ on $C_\alpha$ that is furthest from $0$. It's easy to see that $z_{min}, z_{max}$ and $0$ lie on a straight line in the plane, and $z_{min}, z_{max}$ uniquely determine the circle $C_\alpha$. So $\alpha$ is a subset of a half circle connecting $z_{min}$ and $z_{max}$. We use the notation $\frac{1}{2}C_\alpha$ to refer to this half circle. We have the winding number $w(\frac{1}{2}C_\alpha) = \pm \frac{1}{2}$ or $0$, and thus any subsegment $\alpha \subset \frac{1}{2}C_\alpha$ has winding number $|w(\alpha)|\le 1/2$. We have equality if and only if $\alpha=\frac{1}{2}C_\alpha$ and the endpoints of $\alpha$ are $z_{min}, z_{max}$.

Now let $\alpha, \beta$ be two circle arcs with the same endpoints on $\partial \mathcal{A}$. Note that $\alpha\beta^{-1}$ is a closed loop with integer winding number, and  $|w(\alpha\beta^{-1})| = |w(\alpha) - w(\beta)|\le \frac{1}{2}+\frac{1}{2} =1$. We have $\alpha$ and $\beta$ are not homotopic in $cl(\mathcal{A})$ only when $|w(\alpha\beta^{-1})| =1$. This happens only when $w(\alpha)=w(\beta)=\pm 1/2$ so $\alpha$ and $\beta$ are two halves of the same circle $C\subset \mathcal{A}$, which means the end-points are a singular pair. Otherwise, if the endpoints of $\alpha,\beta$ are are not singular pair then $w(\alpha\beta^{-1}) = 0$ and $\alpha,\beta$ are homotopic rel. end points.
\end{proof}

\begin{definition}
Given a M\"obius annulus $\mathcal{A}$, we define a radial orientation on it to be an equivalence class of bijective conformal maps $\mathcal{A}\rightarrow \{ z\in\mathbb{C}\ |\ e^{-\textnormal{mod}(A)}<|z|<1 \}$ where two maps are equivalent if they are related by a composition with an $SO(2)$ rotation.
\end{definition}
 The difference between two such orientation is an inversion on $\mathbb{C}$ that exchanges the two boundary components of $\{ z\in\mathbb{C}\ |\ e^{-\textnormal{mod}(A)}<|z|<1 \}$. Basically, a radial orientation of $\mathcal{A}$ defines which boundary component of $\mathcal{A}$ is the inner and which is the outer one.

\begin{definition}
\label{defNaturalFibration}
Given a M\"obius annulus $\mathcal{A}$ which is conformally equivalent to  $\{ z\in\mathbb{C}\ |\ e^{-l}<|z|<1 \}$, the \emph{natural $S^1$-fibration} of $\mathcal{A}$ is a continuous map $f:\mathcal{A}\rightarrow [e^{-l},1]$ such that the fibers are concentric circles :$f^{-1}(x) = \{ z\in\mathbb{C}:|z|=x \}$.
\end{definition}
 
\begin{definition}
A \emph{marking} on a radially oriented M\"obius annulus $\mathcal{A}$ is a homotopy class $[\alpha]$ (rel. end points) of curves $\alpha:[0,1]\rightarrow cl({\mathcal{A}})$ such $\alpha(0)$ is in the inner boundary component and $\alpha(1)$ is in the outer one, and that the winding number $|w(\alpha)|\le 1/2$. The pair $(\mathcal{A},[\alpha])$ is then called a marked annulus.
\end{definition}
Note that the definition of winding number is in the proof of lemma \ref{remLem1}. The lemma implies that each marking has a circle arc representative (not unique). Moreover, a marking $[\alpha]$ on a radially oriented M\"obius annulus is completely determined by one end-point of $\alpha$ and the winding number $w(\alpha)$, a real number in $[-1/2, 1/2]$.

\section{Flat conformal structures and circle bundles}
\label{SecFunDomain}

We will first define the \emph{flat conformal structure} on a manifold. Following Thurston's more general notion of $(G,X)$ structures as in \cite{ThurstonNotes} , we let $G=\Mob(S^3)$ and $X=S^3$. Let $M$ be a 3-manifold. Then a $(\Mob(S^3),S^3)$ structure on $M$ is an atlas with charts from open sets of $M$ to $S^3$, and transition maps are restrictions of actions by $\Mob(S^3)$ elements. We will call this a flat conformal structure on $M$.

This more rigid type of manifold structure allows us to extend charts along curves and define a developing map on the universal cover $\textnormal{dev}: \widetilde{M} \rightarrow S^3$ which is a local diffeomorphism. We also get a holonomy representation $\rho:\pi_1(M)\rightarrow \Mob(S^3)$ which is equivariant with respect to $\textnormal{dev}$. That is, for $A\in\pi_1(M)$ acting by deck transformation on $\widetilde{M}$, $\textnormal{dev}(A.p) = \rho(A).\textnormal{dev}(p)$. From an equivariant pair $(\textnormal{dev},\rho)$ one can construct an atlas as in the definition. So a flat conformal structure can be seen as a pair $(\textnormal{dev},\rho)$ satisfying the above conditions.

\subsection{Circle bundles, combinatorial construction}
\label{SecCircleBundles}

The 3-manifolds in which we are interested are total space of oriented circle bundles with structure group $\textnormal{Homeo}^+(S^1)$. Let $E$ be the total space of such circle bundle. Homeomorphisms of the circle can be extended to the unit disc, so we have an associated disc bundle. The Euler number of $E$ can be viewed as the self-intersection number of a section of the associated disc bundle. This is the point of view taken in \cite{GLT} as they estimate the Euler number of conformally flat circle bundles coming from tesselations by regular polyhedra.  

There is another equivalent formulation of the Euler number coming from the fundamental group of the total space $E$ which we will present below. Let $\Sigma_g$ be the topological closed surface of genus $g$. Let $D_{4g}$ be a (2-dimensional) polygon of $4g$ sides and suppose the surface is identified with the polygon $D_{4g}$ with its sides glued under the standard identification pattern $\sim_A$. That is, when we list the sides of polygon $D_{4g}$ in clockwise order we get the word $W_A(A_1,...,A_{2g}) = A_1 A_2 A_1^{-1} A_2^{-1} ... A_{2g-1} A_{2g} A_{2g-1}^{-1} A_{2g}^{-1}$. So we have $\Sigma_g \cong D_{4g}/\sim_A$ and the standard presentation of the fundamental group:

$$\pi_1(\Sigma_g,p) =\left\langle A_1,..., A_{2g}\ |\ W_A(A_1,...,A_{2g}) = 1  \right\rangle$$
(with a slight abuse of notation $A_i$.) We choose nice representative loops $a_i:[0,1]\rightarrow \Sigma_g$ of $A_i$ so that $a_i$ does not intersect $a_j$ except at the base point $p$. We have $\Sigma_g - \cup_i a_i$ can be identified with the interior $int(D_{4g})$.

Let $E\stackrel{f}{\rightarrow}\Sigma_g$ be an orientable circle bundle. Pick any lift $\hat{a}_i$ of $a_i$ so that if $\hat{a}_i$ are closed loops. That is, we have $a_i = p\circ\hat{a}_i$. Let $\hat{A}_i$ be the homotopy class of $\hat{a}_i$. Let $C$ be the homotopy class of the fiber loop over $p$ and we pick a base point $\hat{p}\in E$ over $p\in\Sigma_g$. Since $E$ is an orientable bundle, the bundle restricted to a loop $a_i$ is a torus embedded in $E$. So we have relations $[\hat{A_i},C]=1$ coming from the embbeded tori.  There are $2g$ tori coming from the $2g$ generating loops on the surface. We cut along these tori and we are left with a circle bundle over $int(D_{4g})$ which must be trivial and thus identified with $int(D_{4g})\times S^1$, this space has a natural closure: $D_{4g}\times S^1$.  Note that $D_{4g}\times S^1$ has $4g$ boundary pieces, each a topological annulus, which are paired up and identified by a collection of maps $\sim_{\bf A}$ to recover the circle bundle $E$, that is $E \cong D_{4g}\times S^1/\sim_{\bf A}$. By van Kampen theorem we get:
$$\pi_1(E) =\left\langle \hat{A_1},...,\hat{A_2g}, C\ |\ [\hat{A_i},C]=1, W_A(\hat{A_1},...,\hat{A}_{2g}) = C^k  \right\rangle$$
for some $k\in\mathbb{Z}$.
\begin{remark} In the present article we define this number $k$ to be the Euler number $e(E)$ of circle bundle $E\stackrel{f}{\rightarrow} \Sigma_g$.
\end{remark}

 Note that $k$ does not depend on the choice of generators $\hat{a_i}$. If we choose a different lift $\hat{a}'_i$ over $a_i$, then $\hat{a}'_i$ is also a loop in the torus over $a_i$, and $\hat{A}'_i \simeq \hat{A_i} C^{a_i}$ for some integer $a_i$. We have $W(\hat{A_1},...,\hat{A}_{2g})$ contains $\hat{A_i}$ and $\hat{A_i}^{-1}$ exactly once, and $C$ commutes with everything in $\pi_1(E)$, so replacing $\hat{A_i}$ with $\hat{A'_i}$ does not change the relation and the invariant $e(E)$. 

Now we will show it is possible to compute $e(E)$ from different (non-standard) presentations of $\pi_a(\Sigma_g)$ and $\pi_1(E)$. Let $D_n$ be a polygon of $n$ (even) sides and $\sim_B$ is an identification of its sides so that the closed surface $\Sigma_g$ is identified with $D_n /\sim_B$. As a side note, we have $E\cong D_n\times S^1/\sim_{\bf B}$ by a similar construction as before. We name the sides $B_1,...,B_{n/2}, B_1^{-1},...,B_{n/2}^{-1}$ and let $W_B(B_1,...,B_{n/2})$ be the word obtained by listing the edges of $P_n$ in clockwise order. Let $b_i$ be nice paths on the surface $\Sigma_g$ that represent $B_i$. Note that $b_i$ are not necessarily closed loops, but paths connecting between a collection of base points $\{p_1,...,p_m\}$. We choose lifts $\{\hat{p}_1,...,\hat{p}_m\}$ over these points, and we choose a lifts $\hat{b}_i$ over $b_i$ so that they connect between points in $\{\hat{p}_1,...,\hat{p}_m\}$. We have the homotopy class $[W_B(\hat{b}_1,...,\hat{b}_{n/2})] = C^{l}$ for some integer $l$. Next we will show that $k=l$.

 The concatenations of $b_i$'s generate all homotopy classes of loops on $\Sigma_g$. So we let $\alpha_i(b_1,...,b_{n/2})$ be the concatenated loop that is homotopic to $a_i$. We assume that the base point $p$ of $a_i$ is the same as the starting base point of $b_1$. We have $\alpha_i(\hat{b}_1,...,\hat{b}_{n/2})$ is a lift of $\alpha_i(b_1,...,b_{n/2})$ and $\alpha_i(b_1,...,b_{n/2})$ is homotopic to $a_i$. So by homotopy lifting property, $\alpha_i(\hat{b}_1,...,\hat{b}_{n/2})$ is homotopic to a lift $\hat{a}_i$ over $a_i$. Therefore $W_A[\alpha_1(\hat{b}_1,...,\hat{b}_{n/2}),...,\alpha_{4g}(\hat{b}_1,...,\hat{b}_{n/2}) ]$ is homotopic to $W_A(\hat{a}_1,...,\hat{a}_{4g})\simeq C^k$.
 
 Without loss of generality, we can assume $W_A[\alpha_1({b}_1,...,{b}_{n/2}),...,\alpha_{4g}({b}_1,...,{b}_{n/2}) ]$ and $W_B(b_1,...,b_{n/2})$ both starts with $b_1$. Let $x_0\in\Sigma_g - (\cup_i a_i) - (\cup_j b_j)$ so that $\Sigma_g - \{x_0\}$ deformation retracts to either the $a_i$ skeleton or the $b_i$ skeleton. So we have the following loops $W_A[\alpha_1({b}_1,...,{b}_{n/2}),...,\alpha_{4g}({b}_1,...,{b}_{n/2}) ]$ and $W_A(a_1,...,a_{4g})$ and $W_B(b_1,...,b_{n/2})$ are all homotopic (to a simple loop around $x_0$) in  $\Sigma_g - \{x_0\}$. Through the deformation retract from $\Sigma_g -\{x_0 \}$ to the $b_i$ skeleton, we have $W_A[\alpha_1({b}_1,...,{b}_{n/2}),...,\alpha_{4g}({b}_1,...,{b}_{n/2}) ]$  and $W_B(b_1,...,b_{n/2})$ are homotopic in the $b_i$ skeleton $S^{(1)}_B$ whose fundamental group $\pi_1(S^{(1)}_B)$  is a free group. Therefore $W_B(b_1,...,b_{n/2})$  must be obtained from $W_A[\alpha_1({b}_1,...,{b}_{n/2}),...,\alpha_{4g}({b}_1,...,{b}_{n/2})]$ by removing overlaps such as $b_ib_i^{-1}$, because otherwise we would obtain a non-trivial relator for $\pi_1(S^{(1)}_B)$. This implies the homotopy
 
 $$ W_A[\alpha_1(\hat{b}_1,...,\hat{b}_{n/2}),...,\alpha_{4g}(\hat{b}_1,...,\hat{b}_{n/2})] \simeq W_B(\hat{b}_1,...,\hat{b}_{n/2})$$
 and thus we have
 
 \begin{remark}
 $$
 C^k = [W_A[\alpha_1(\hat{b}_1,...,\hat{b}_{n/2}),...,\alpha_{4g}(\hat{b}_1,...,\hat{b}_{n/2})]] = [W_B(\hat{b}_1,...,\hat{b}_{n/2})] = C^l.
 $$
 so $l=k=e(E)$.
\end{remark}

\subsection{Structures with fundamental domains}
\label{SecFundomain}

We will only consider conformal structures on $E$ with holonomy representation $\rho: \pi_1(E)\stackrel{}{\rightarrow}\Mob(S^3)$ that factors into 
$$\begin{array}{ccc}
\pi_1(E) & \rightarrow &\Mob(S^3) \\
\downarrow & \nearrow \\
\pi_1(\Sigma_g)
\end{array}$$
In other words, the fiber generator $C\in\pi_1(E)$ is mapped to $\rho(C) = 1$. Let $\Gamma = \rho(\pi_1(E)) \subset\Mob(S^3)$ be the image of $\rho$, so $\Gamma$ is isomorphic to a fundamental group of a surface, $\Gamma$ is said to be a surface group in $\Mob(S^3)$.

\begin{figure}[h]
\centering
\includegraphics[width=10cm]{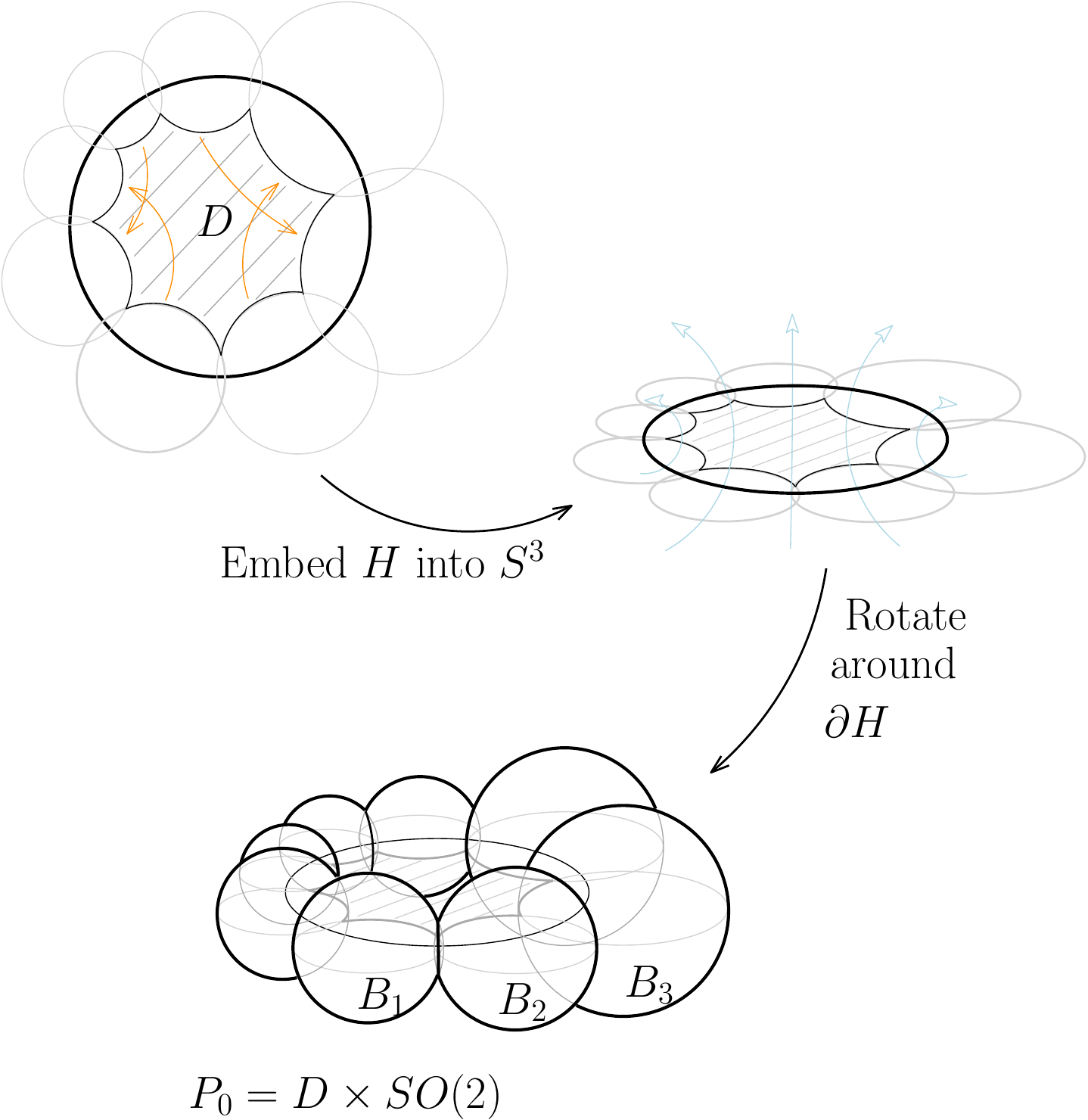}
\caption{Example 1, Fuchsian example illustration. In this picture, infinity is inside $P_0$.}
\label{fig:Example1}
\end{figure}

\emph{Example 1.} 
An easy example of a conformally flat manifold can be described as follows. Take a hyperbolic surface that has fundamental domain $D$ with a \emph{standard identification pattern}; embed the unit disk model of $\mathbb{H}^2$ into $S^3$ as a half-sphere $H$; then rotate $H$ by the singular-elliptic group $\textnormal{Fix}(\partial H)$. The domain $D$ under $\textnormal{Fix}(\partial H)$-action will sweep out a polyhedron in $S^3$ which we denote by $P_0$, and we have $P_0 = S^3 - \cup_{i=1}^n B_i$ where $B_i$ are open balls. So the faces of $P_0$ are all \emph{aligned}, that is, the spheres $\partial B_1, \partial B_2,... , \partial B_n$ are all orthogonal to the same circle $\partial H$. (Note that by open ball we mean the open connected region in $S^3$ that's bounded by a 2-sphere.)

This example is basically obtained from a totally geodesic embedding $\mathbb{H}^2\hookrightarrow\mathbb{H}^4$ along with an embedding of Fuchsian group $\Gamma_0\hookrightarrow PSL(2,\mathbb{R})\hookrightarrow \Mob^+(S^3)$. $P_0$ is a fundamental domain for the action of $\Gamma_0$ on $S^3 = \partial_\infty\mathbb{H}^4$. Moreover, $(S^3 - \partial H) /\Gamma_0$ is conformally flat trivial bundle over a surface.

\bigskip

\emph{Example 2.}
The first examples of non-trivial circle bundles with flat conformal structures were constructed by Gromov-Lawson-Thurston \cite{GLT}, Kapovich \cite{Kapovich89Flat} and Kuiper \cite{Kuiper88}. All these examples are constructed by fundamental domains and/or tesselations of $\mathbb{H}^4$.


Next, some definitions.

\begin{definition}
A polyhedron in $S^1$ is either a circle arc bounded by 2 points or the whole circle. A \emph{polyhedron} $P\subset S^m$ is a closed region with non-empty interior $int(P)$ such that:
\begin{itemize}
\item $cl(int(P))=P$,
\item the boundary $\partial P$ is a union of polyhedra in $S^{m-1}$ such that the intersection between two of them are either empty or polyhedra in $S^{m-2}$.
\end{itemize}
The codimension 1 pieces of the boundary are called \emph{faces} and the codimension 2 pieces are called \emph{edges}.
\end{definition}

\begin{definition}
Let $P\subset S^m$ be a polyhedron of dimension $m$. We use $\{P^{m-k}\}$ to denote the set of codimension $k$ polyhedra on the boundary of $P$. We can also call it the combinatorial $(m-k)$-skeleton of $\partial P$.
\end{definition}

\begin{definition}
A polyhedron $P\subset S^m$ is \emph{convex} if $P = S^m - \cup B_i$ where each $B_i$ is a ball with boundary an $(m-1)$-sphere. That is $P$ is the intersection of a collection of half spaces.
\end{definition}

\begin{figure}[h]
\centering
\includegraphics[width=5cm]{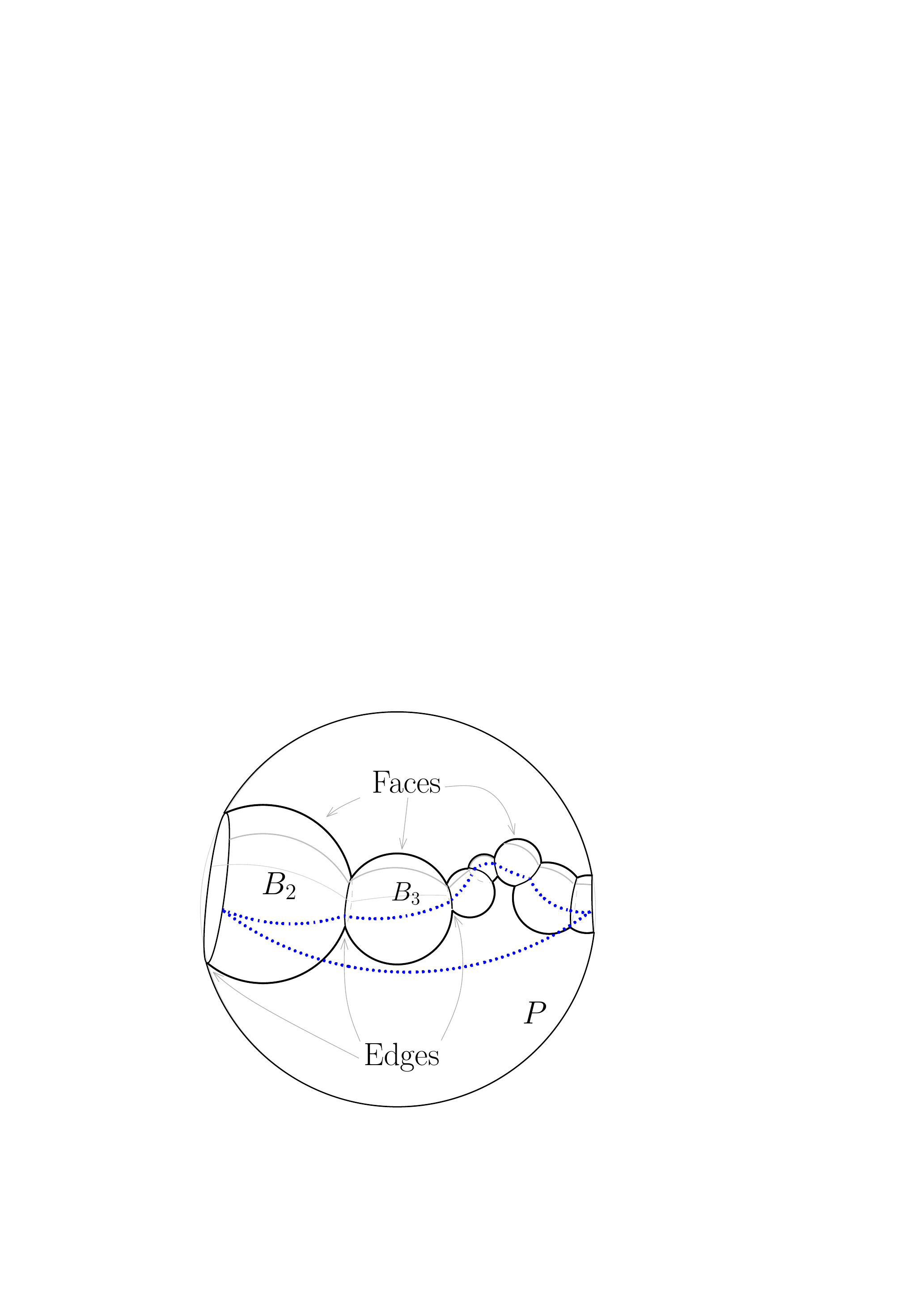}
\caption{A solid torus polyhedron $P$ with $n=8$ faces. In this picture, infinity is in $S^3 - P$.}
\label{fig:mobiuspoly}
\end{figure}

\noindent Following \cite{AG11} we define the following:
\begin{definition}
A \emph{cornerless} polyhedron is one whose boundary contains only codimension 1 and codimension 2 pieces.
\end{definition}

\noindent

Let $Q$ be a (2-dimensional) polygon with $n$ edges and hence $n$ vertices. Let $\sim_Q$ be an identification of the edges of $Q$ such that $Q/\sim_Q$ is topologically a closed orientable surface. Let $\Sigma_Q = Q/\sim_Q$ be this topological surface. There is an equivalence relations on the set of edges $\{Q^1\}$ and on the set of vertices $\{Q^0\}$ of $Q$ induced by the above identification; abusing notation we will call this equivalence relation $\sim_Q$ as well.

Let $P$ be a (cornerless) solid torus polyhedron with $n$ faces so that the faces are M\"obius annuli. Let $\sim_P$ be an equivalence relation on the set of faces $\{P^2\}$ and the set of edges $\{P^1\}$ of $P$. We say that $(P,\sim_P)$ and  $(Q,\sim_Q)$ are \emph{combinatorially equivalent} if and only if there's a bijective map  $f: \{P^2\}\rightarrow\{Q^1\}$, $f: \{P^1\}\rightarrow\{Q^0\}$ such that $P^k_i \sim_P P^k_j$ exactly when $f(P^k_i) \sim_Q f(P^k_j)$ for $k=1,2$ and $i,j\in \mathbb{Z}/n\mathbb{Z}$.

Note that even though the topological surface $Q/\sim_Q$ is well defined, $P/\sim_P$ is not well-defined if we only have the combinatorial information: equivalence relations on the set of faces $\{P^2\}$ and on the set of edges $\{P^1\}$. We would need actual homeomorphisms between paired faces to define $P/\sim_P$ as a manifold using quotient topology.

\begin{remark}
The natural $S^1$-fibration of $\partial P$ is a continuous map $\partial P\rightarrow \partial Q$ that extends the natural $S^1$-fibration on each face of $P$. See definition \ref{defNaturalFibration}.
\end{remark}


\begin{definition}
The \emph{domain of discontinuity} of $\Gamma\subset SO(4,1)$, denoted by $\Omega_\Gamma$, is defined to be the largest open subset of $S^3$ on which $\Gamma$ acts properly discontinuously. The \emph{limit set} $\Lambda_\Gamma$ of $\Gamma$ is defined to be $S^3 - \Omega_\Gamma$.
\end{definition}

\begin{theorem}
\label{theoremGammaEuler}
 Let $P\subset S^3$ be a corneless solid torus polyhedron with $n$ faces and suppose $P$ is a fundamental domain for a surface group $\Gamma\subset\Mob^+(S^3)$ acting on $S^3 - \Lambda_\Gamma$. Additionally suppose that the face-pairing transformations in $\Gamma$ define an equivalence relation $\sim_P$ on the set of faces and the set of edges of $P$ combinatorially equivalent to $\sim_Q$ on the set of edges and the set of vertices of a polygon $Q$ that results in an orientable closed surface. Conclusion: Then $E = (S^3 - \Lambda_\Gamma) / \Gamma$ is a conformally flat circle bundle over a closed surface, and there is a loop $\gamma\subset \partial P$ composed of $n$ circular arcs such that $[\gamma] = {e(E)}\in \pi_1(P)\cong\mathbb{Z}$ (with appropriate orientation for the generator of $\pi_1(P)$).
\end{theorem}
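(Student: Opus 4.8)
The plan is to prove the two assertions — that $E$ is a conformally flat circle bundle, and that the distinguished loop computes the Euler number — separately, establishing the bundle structure first since the Euler-number statement presupposes the fibration. Conformal flatness is immediate: $\Gamma\subset\Mob^+(S^3)$ acts properly discontinuously on $\Omega_\Gamma = S^3-\Lambda_\Gamma$ by hypothesis, and since a surface group is torsion-free the action is free, so $E=\Omega_\Gamma/\Gamma$ is a manifold inheriting the $(\Mob(S^3),S^3)$-atlas of $S^3$. The content is the bundle structure. Because $P$ is a fundamental domain, $E$ is obtained from $P$ by gluing its faces in pairs via the face-pairing transformations, i.e.\ $E\cong P/\!\sim_P$. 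The first geometric input I would record is that a cornerless solid-torus polyhedron whose faces are M\"obius annuli carries a canonical $S^1$-fibration $\pi_P\colon P\to Q$: on each face it is the natural $S^1$-fibration of Definition \ref{defNaturalFibration}, on $\partial P$ it is the map $\partial P\to\partial Q$ of the Remark preceding the theorem, and since $P\cong D^2\times S^1$ is a solid torus with $\partial P\to\partial Q$ already an $S^1$-bundle, one extends the fibers radially into the interior to obtain $\pi_P$ with fiber the core circle. In particular $\pi_1(P)\cong\mathbb{Z}$ is generated by a fiber, which will be the fiber class $C$ of the eventual bundle.

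Next I would show $\pi_P$ descends through the gluing. The key local fact is that every face-pairing transformation is \emph{fiber-preserving}: a face-pairing $g\in\Gamma$ is a conformal bijection carrying one M\"obius annulus onto another (forcing equal moduli, cf.\ Remark \ref{annuliInvariant}), and after standardizing both annuli it becomes a holomorphic automorphism of $\{e^{-l}<|z|<1\}$, hence either a rotation or a boundary-swapping map $z\mapsto e^{-l}e^{i\theta}/z$; both send concentric circles to concentric circles. Thus $g$ carries the natural fibration of one face to that of the paired face, the circle fibers match across each glued pair, and $\pi_P$ descends to $\bar\pi\colon E\to\Sigma_Q$, where $\Sigma_Q=Q/\!\sim_Q$ is the closed orientable surface supplied by the combinatorial equivalence. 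The main obstacle — and the step I would treat most carefully — is verifying that $\bar\pi$ is a locally trivial $S^1$-bundle, which is only in question along the edges of $P$ (mapping to vertices of $Q$): there a cycle of faces closes up around an edge, and I must check that the fiber-preserving gluings assemble into a product neighborhood over a disk around the corresponding vertex of $\Sigma_Q$. This is exactly where the hypotheses enter — combinatorial equivalence to a polygon identification yielding a closed orientable surface guarantees the edge cycles close into disk neighborhoods, and the cornerless hypothesis rules out higher-codimension strata. Granting this, $\bar\pi\colon E\to\Sigma_Q$ is an orientable circle bundle, and comparing the face gluing $\sim_P$ with the annular gluing $\sim_{\bf B}$ identifies $E$ with the combinatorial model $D_n\times S^1/\!\sim_{\bf B}$ of Subsection \ref{SecCircleBundles}.

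Finally, for the Euler number I would combine this identification with the group-theoretic computation of Subsection \ref{SecCircleBundles}. Writing $W_B$ for the boundary word of $Q$, which has $n$ letters (one per edge, hence one per face of $P$), I choose the lift $\hat b_i$ of each edge to be a circle-arc representative of a marking on the corresponding face $F_i$; such a representative exists by Lemma \ref{remLem1}, crosses $F_i$ from one boundary circle to the other, and projects onto the edge $f(F_i)$ of $Q$ under $\pi_P$. Since the face-pairings are fiber-preserving, the endpoints of consecutive arcs match after gluing, so the concatenation $\gamma=W_B(\hat b_1,\dots,\hat b_{n/2})\subset\partial P$ is a closed loop built from exactly $n$ circular arcs. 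By the computation of Subsection \ref{SecCircleBundles} one has $[\,W_B(\hat b_1,\dots,\hat b_{n/2})\,]=C^{e(E)}$, and since $C$ is the generator of $\pi_1(P)\cong\mathbb{Z}$ fixed above, this reads $[\gamma]=e(E)$ in $\pi_1(P)$, as claimed. I expect the only remaining subtlety to be bookkeeping of orientations — ensuring the normalization $|w|\le 1/2$ of the markings in Lemma \ref{remLem1} makes the concatenation well-defined and that the chosen orientation of the core circle matches the sign convention for $e(E)$.
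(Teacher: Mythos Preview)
Your approach is essentially the paper's: descend the natural $S^1$-fibration through the face-pairings to get the bundle structure, then invoke the computation of Section~\ref{SecCircleBundles} to read off $e(E)$ from the concatenated boundary word. The one step that needs tightening is the construction of $\gamma$. Your justification ``since the face-pairings are fiber-preserving, the endpoints of consecutive arcs match after gluing'' is not the right reason: fiber-preservation sends fibers to fibers, but says nothing about whether the particular endpoints of your independently chosen markings coincide on the shared edge $E_{i+1}$ of adjacent faces $\mathcal{A}_i,\mathcal{A}_{i+1}$ in $\partial P$ (and $\gamma$ must close up in $\partial P$, not merely in $E$).

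The paper fixes this by reversing the order of choices: first pick one point $p_{i_1}$ on a representative edge in each edge-equivalence class and propagate it via the face-pairings to obtain $p_i\in E_i$ for all $i$ (so equivalent edges carry points with the same image in $E$); only then connect $p_i$ to $p_{i+1}$ by a circle arc $\gamma_i\subset\mathcal{A}_i$ using Lemma~\ref{remLem1}, and force the arc on a paired face to be $\gamma_j=B_{i,j}(\gamma_i^{-1})$. This is precisely the ``choose lifts $\hat p_1,\dots,\hat p_m$ first, then lifts $\hat b_i$ connecting them'' protocol from Section~\ref{SecCircleBundles} that your appeal to that section already presupposes --- so the repair is straightforward, but the reasoning you wrote down should be replaced by it.
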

\begin{proof}
Following \cite{GLT}(section 5 and 7) we have $\Lambda_\Gamma$ is a topological circle in $S^3$, and $S^3-\Lambda_\Gamma$ is homeomorphic to a solid torus.

The fundamental domain condition implies that we have a tesselation of $S^3 - \Lambda_\Gamma$ by action of $\Gamma$ on $P$. The side-pairing M\"obius tranformations are unique, and they realize $\sim_P$, thus manifold $E=P/\sim_P$ can be defined. Moreover $E=P/\sim_P = (S^3 - \Lambda_\Gamma) / \Gamma$ a conformally flat manifold because $P$ is a fundamental domain for $\Gamma$.

We can define a fibration $S^1\rightarrow P\rightarrow Q$ extending the natural $S^1$-fibration $\partial P \rightarrow \partial Q$. Note that the side-pairing M\"obius tranformations preserve the natural $S^1$-fibration of $\partial P$. In addition, $(P,\sim_P)$ is combinatorially equivalent to $(Q,\sim_Q)$, so we get a fibration $S^1\rightarrow (P/\sim_P) \rightarrow (Q/\sim_Q)$. Therefore $E$ is a circle bundle over the closed surface $\Sigma_Q = Q/\sim_Q$.

Now we will describe an algorithm to construct the loop $\gamma$. Let  $\mathcal{A}_1...\mathcal{A}_n$ be the faces of $P$ and $E_1,...,E_n$ be the edges so that for $i\in\mathbb{Z}/n\mathbb{Z}$ we have $\mathcal{A}_i, \mathcal{A}_{i+1}$ are adjacent and share an edge: $E_{i+1}$, which means $\mathcal{A}_i$ contains edges $E_i, E_{i+1}$ for $i\in\mathbb{Z}/n\mathbb{Z}$. (The faces and the vertices of $P$ are cyclically ordered.) Suppose edges $E_{i_1},...,E_{i_m}$ are identified under face pairing M\"obius transformations. We pick a point $p_{i_1}\in E_{i_1}$. The identification maps give us $p_{i_k} \in E_{i_k}$ for $k=2,...,m$. Note that these are lifts of the same point in $E$. We can do the above for every other equivalence class of edges. Now we have a point $p_i$ on each edge $E_i$ for $i=1,...,n$. Face $\mathcal{A}_i$ contains edges $E_i, E_{i+1}$ for $i\in\mathbb{Z}/n\mathbb{Z}$, so we connect $p_i$ to $p_{i+1}$ by a circle arc $\gamma_i: [0,1]\rightarrow\mathcal{A}_i$. This is possible by lemma \ref{remLem1}. Suppose $A_i$ is identified with $A_j$ by the M\"obius transformation $B_{i,j}$, then we have $B_{i,j}(p_i) =p_{j+1}$ and $B_{i,j}(p_{i+1}) = p_{j}$. Then let $\gamma_j = B_{i,j}(\gamma_i^{-1})$ which is a circle arc connecting $p_{j}$ to $p_{j+1}$.  We can repeat this process until there is a circle arc on each face connecting $p_1,...,p_n$, and therefore the concatenation $\gamma = \gamma_1\gamma_2...\gamma_n$ is a closed loop.

Consider the quotient map $p: P\rightarrow (P/\sim_P) = E$, and the induced $p^*:\pi_1(P)\rightarrow\pi_1(Q)$, and also the quotient map $q: Q\rightarrow Q/\sim_Q = \Sigma_Q$ and the induced $q^*: \pi_1(Q)\rightarrow\pi_1(\Sigma_Q)$. Let $c$ be a loop in $P$ so that $[c]$ generates $\pi_1(P)$. By construction, $p^*([c])\in\pi_1(E)$ is the generator corresponding to the fiber of $E$. So we let $C = p^*([c])$ the fiber generator.

Let $a_1,...,a_n$ be the edges of the polygon $Q$ corresponding to $\mathcal{A}_1,...,\mathcal{A}_j$ under the equivalence. If $\mathcal{A}_i, \mathcal{A}_j$ are identified faces then $p(\gamma_i)=p(\gamma_j^{-1})$ in $E$, and also $p(a_i)=p(a_j^{-1})$ in $\Sigma_Q$. We have $p(\gamma_i)$ is a lift of $q(a_i)$ for $i=1,...,n$. So following the discussion in section \ref{SecCircleBundles} we have $p^*([\gamma]) = [p(\gamma)]=[p(\gamma_1)...p(\gamma_n)]$ by definition. But $p(\gamma_1)...p(\gamma_n)$ is a lift of a homotopically trivial loop on the surface, so $[p(\gamma_1)...p(\gamma_n)] = C^k = p^*([c])^k = p^*([c]^k) \in\pi_1(E)$ for some integer $k$. Thus $p^*([\gamma])= p^*([c]^k)$ which implies $[\gamma] = [c]^k \in\pi_1(P)$ since $p^*$ is injective. Moreover $e(E) = k$ as discussed in section \ref{SecCircleBundles}, the theorem follows.
\end{proof}
As a corollary, with the same hypothesis as above implies $|e(E)|<\frac{3}{2}n^2$. The proof will be in section 4.


\label{exampleNew}
\emph{Example 3.}
First let us describe a new non-trivial example with Euler number computation. A new feature is that there will be side-pairing M\"obius transformations that are loxodromic without rotation.

Let $S_1, S_2, ..., S_{25}$ be open Euclidean spheres in $\mathbb{R}^3 = S^3 - \{\infty \}$ centered at 
$\left(\begin{array}{c}0 \\ 0 \\ 0 \end{array}\right)$, 
$\left(\begin{array}{c}1 \\ 0 \\ 0 \end{array}\right)$, 
$\left(\begin{array}{c}1 \\ 0 \\ 1 \end{array}\right)$, 
$\left(\begin{array}{c}1 \\ 0 \\ 2 \end{array}\right)$, 
$\left(\begin{array}{c}1 \\ 1 \\ 2 \end{array}\right)$, 
$\left(\begin{array}{c}1 \\ 2 \\ 2 \end{array}\right)$, 
$\left(\begin{array}{c}1 \\ 2 \\ 1 \end{array}\right)$, 
$\left(\begin{array}{c}1 \\ 2 \\ 0 \end{array}\right)$, 
$\left(\begin{array}{c}2 \\ 2 \\ 0 \end{array}\right)$, 
$\left(\begin{array}{c}3 \\ 2 \\ 0 \end{array}\right)$, 
$\left(\begin{array}{c}3 \\ 1 \\ 0 \end{array}\right)$, 
$\left(\begin{array}{c}3 \\ 0 \\ 0 \end{array}\right)$, 
$\left(\begin{array}{c}4 \\ 0 \\ 0 \end{array}\right)$, 
$\left(\begin{array}{c}5 \\ 0 \\ 0 \end{array}\right)$, 
$\left(\begin{array}{c}5 \\ 0 \\ 1 \end{array}\right)$, 
$\left(\begin{array}{c}5 \\ 0 \\ 2 \end{array}\right)$, 
$\left(\begin{array}{c}5 \\ 1 \\ 2 \end{array}\right)$, 
$\left(\begin{array}{c}5 \\ 2 \\ 2 \end{array}\right)$, 
$\left(\begin{array}{c}5 \\ 2 \\ 1 \end{array}\right)$, 
$\left(\begin{array}{c}5 \\ 2 \\ 0 \end{array}\right)$, 
$\left(\begin{array}{c}6 \\ 2 \\ 0 \end{array}\right)$, 
$\left(\begin{array}{c}7 \\ 2 \\ 0 \end{array}\right)$, 
$\left(\begin{array}{c}7 \\ 1 \\ 0 \end{array}\right)$, 
$\left(\begin{array}{c}7 \\ 0 \\ 0 \end{array}\right)$,
$\left(\begin{array}{c}8 \\ 0 \\ 0 \end{array}\right)$,
 respectively.
We choose $r_1$ to be the radius for $S_i$ for odd $i$ between 1 and 25, and $r_2$ to be the radius for $B_i$ for even $i$ between 1 and 25. It is possible to choose $r_1, r_2$ so that only adjacent spheres intersect. For $i=1,...,24$ we let $E_{i+1} = S_i \cap S_{i+1}$ which are all Euclidean circles. Notice that by construction, the radii of $E_i$ are the same and its value depends on $r_1, r_2$. Let $r$ be the radius of $E_i$, technically $r$ is a function $r(r_1,r_2)$. Let $E_1$ be the image of $E_2$ under the reflection 
$\left(\begin{array}{c}x \\ y \\ z \end{array}\right) \mapsto \left(\begin{array}{c}-x \\ y \\ z \end{array}\right)$.
 Let $E_{26}$ be the image of $E_{25}$ under the reflection 
$\left(\begin{array}{c}x \\ y \\ z \end{array}\right) \mapsto \left(\begin{array}{c}16-x \\ y \\ z \end{array}\right)$.
Let $S_{0} = S_{28}$ be the sphere centered at $\left(\begin{array}{c}-d \\ 0 \\ 0 \end{array}\right)$ that contains $E_1$, and let $S_{26}$ be the ball centered at $\left(\begin{array}{c}8+d \\ 0 \\ 0 \end{array}\right)$ that contains $E_{26}$. Note that $S_0$ and $S_{26}$ are symmetric from the mid point $\left(\begin{array}{c}4 \\ 0 \\ 0 \end{array}\right)$. Let ${S}_{27}$ be a sphere centered at $\left(\begin{array}{c}4 \\ 0 \\ 0 \end{array}\right)$ with radius $R$ so that $S_{27}$ only intersect $S_{26}$ and $S_0$, and let $E_{27} = S_{26}\cap S_{27}$, $E_0=E_{28} = S_{27}\cap S_0$. For $i\in\mathbb{Z}/28\mathbb{Z}$ let $\mathcal{A}_i$ be the M\"obius annulus contained in $S_i$, bounded by $E_i, E_{i+1}$. If $d$ is large enough, we can choose radius $R$ such that $\textnormal{mod}(\mathcal{A}_{25}) = \textnormal{mod}(\mathcal{A}_{27})$, so $R$ is completely determined by $r_1,r_2,d$. Let $P=P(r_1,r_2,d)$ be the conformal polyhedron bounded by $\mathcal{A}_1,...,\mathcal{A}_{28}$. Let $\theta = \theta(r_1,r_2,d)$ be the sum of inner dihedral angles at each edge of $P$. We have $\theta(\frac{1}{2},\frac{1}{2},8)=0$ and $\theta(\frac{3}{4},\frac{3}{4},8)>2\pi$. So we can choose $r_1,r_2,d$ such that $\theta = 2\pi$.

\begin{figure}[h]
\centering
\includegraphics[width=12cm]{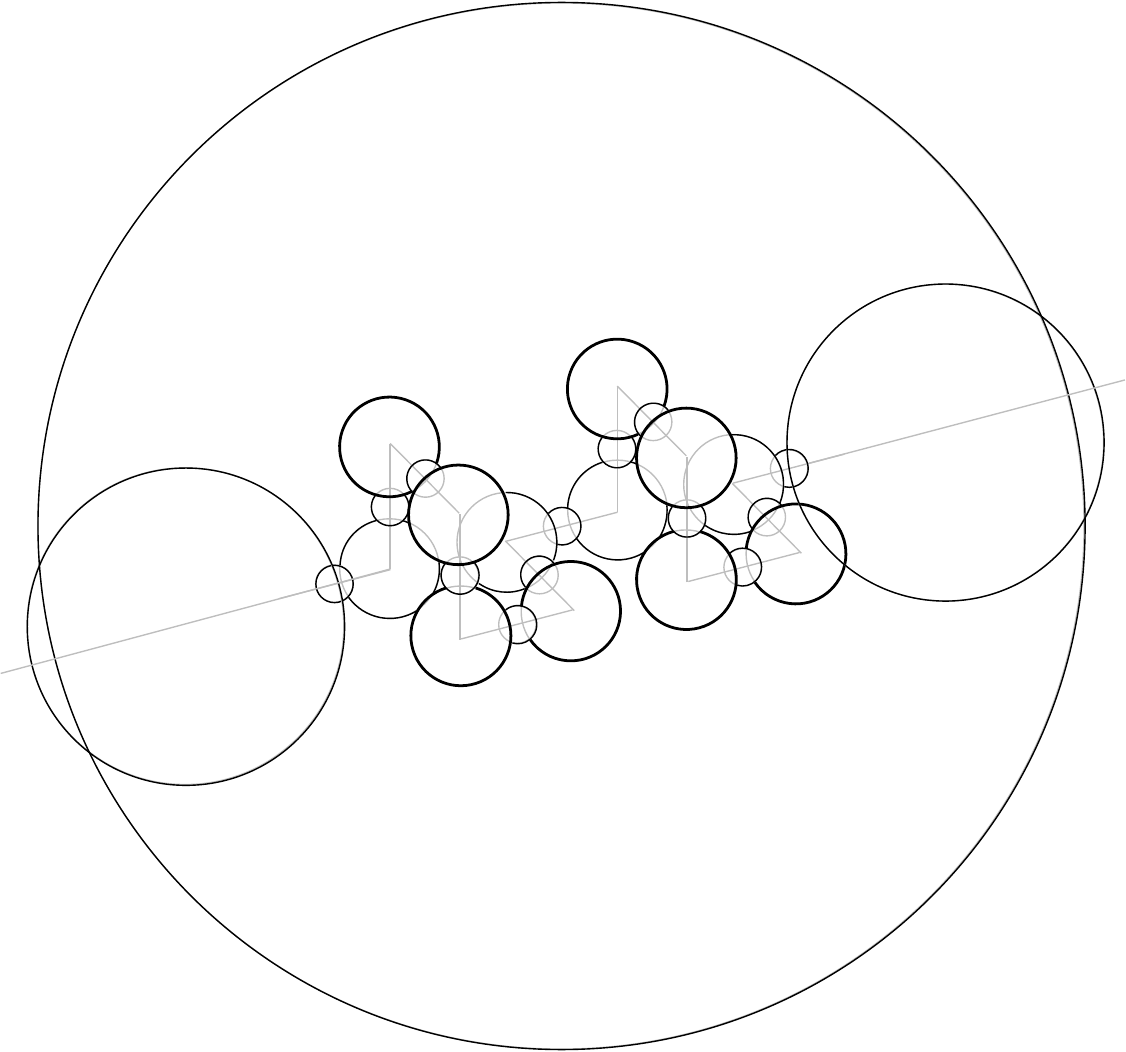}
\caption{Example 3 illustration.}
\label{fig:Example3}
\end{figure}

Let $p_1\in E_1$ such that the $z$-coordinate of $p_1$ is $r$. That is, $p_1$ is the point in $E_1$ with maximum $z$-coordinate. Given $p_i\in E_i$, choose $p_{i+1}\in E_{i+1}$ such that the winding number $w(\gamma_i)=0$ for any circle arc $\gamma_i\subset\mathcal{A}_i$ connecting $p_i$ to $p_{i+1}$. So we can choose circle arc $\gamma_i$ connecting $p_i$ to $p_{i+1}$ so that it is orthogonal to both $E_i, E_{i+1}$. We can check that $\gamma = \gamma_1 ...\gamma_{28}$ form a closed loop and $[\gamma]$ generates $\pi_1(P)$. 

We now have marked annuli $(\mathcal{A}_i, \gamma_i)$ for $i\in\mathbb{Z}/28\mathbb{Z}$. There are unique (loxodromic) M\"obius transformations identifying these marked faces in standard identification pattern, that is, $(\mathcal{A}_1\stackrel{A_1}{\longrightarrow} \mathcal{A}_3), (\mathcal{A}_4\stackrel{B_1}{\longrightarrow} \mathcal{A}_2), (\mathcal{A}_5\stackrel{A_2}{\longrightarrow} \mathcal{A}_7), (\mathcal{A}_8\stackrel{B_2}{\longrightarrow} \mathcal{A}_6), $ etc.

We can check that the edges $E_1\stackrel{A_1}{\longrightarrow} E_4\stackrel{B_1}{\longrightarrow} E_3 \stackrel{A_1^{-1}}{\longrightarrow} E_2 \stackrel{B_1^{-1}}{\longrightarrow} E_5 \stackrel{A_2}{\longrightarrow} ... \stackrel{B_7^{-1}}{\longrightarrow} E_1$ form a \emph{geometric cycle of edges} as defined in \cite{AG11}, and by the main theorem in the same paper we have the face-pairing transformations generate a surface group $\Gamma$ where $P$ is a fundamental domain for $\Gamma$. (In order to check the \emph{geometric cycle of edges} condition we can construct two other piecewise-circle-arc loops around $\partial P$ compatible with the determined side identification maps, this shows that the return map fixes 3 points on $E_1$ which must be singular-elliptic with rotation angle $\theta=2\pi$ which must then be the identity.) Moreover, by theorem \ref{theoremGammaEuler} we get $(S^3 - \Lambda_\Gamma )/ \Gamma$ is a conformally flat circle bundle with Euler number 1 over a surface of genus 7.

In this example we have $\Gamma = \langle A_1, B_1,..., A_7,B_7\ |\ \prod[A_i,B_i] = 1 \rangle$ where $A_i$ are loxodromics without rotation. This allows for more freedom of deformations as will be described below.

\subsection{Deformations of surface groups}
\label{SecDeform}
Let $G$ be a Lie group. For any $A\in G$ we define $C_G(A)$ be the centralizer of $A$ in $G$.  Note that $C_G(A)\subset G$ is a Lie subgroup which contains 1-parameter subgroups through $A$ (assuming $A$ is in the identity component of $G$).

\bigskip

Let $\Gamma\subset G$ be a surface group with standard generators $$\Gamma = \langle A_1, B_1,..., A_g,B_g\ |\ W(A_1, B_1,..., A_g,B_g)= \prod[A_i,B_i] = 1 \rangle.$$
We will now describe algebraic deformations of surface group $\Gamma$ which corresponds to earthquake/grafting in the level of representations.

\bigskip

\noindent {\bf Non-separating simple closed loops.}
Let $C\in C_G(A_1)$ not the identity, and let $B_1' = B_1 C$. Then $A_1B_1'A_1^{-1}B_1'^{-1} =A_1B_1CA_1^{-1}C^{-1}B_1^{-1} = A_1B_1A_1^{-1}B_1^{-1}. $ Then 
$$
\Gamma' = \langle A_1, B'_1,A_2,B_2..., A_g,B_g\ | W(A_1, B_1',...,A_g,B_g) = 1 \rangle
$$
is another surface group in $G$. In most cases, $\Gamma'$ is not a conjugate of $\Gamma$.

\noindent {\bf Separating simple closed loops.}
Let $k$ be an integer between $1$ and $g-1$. (Think of $k$ as the genus of a component of $\Sigma_g$ with a separating simple closed loop removed.)  Let $C\in C_G(\prod_{i=1}^k [A_i,B_i])$. For $i=1,...,k$, let $A'_i = CA_i C^{-1}$ and $B'_i = CB_i C^{-1}$. Let $A'_i = A_i$ and $B'_i = B_i$ for $i=k+1,...,g$. We can check that $\prod_{i=1}^k [A'_i,B'_i]) = \prod_{i=1}^k [A_i,B_i])$ and thus
$$
\Gamma' = \langle A'_1, B'_1,..., A'_g,B'_g\ |  \prod[A'_i,B'_i] = 1 \rangle
$$
is another surface group.

\begin{definition}
We call the above operation an \emph{algebraic earthquake} on surface group representations.
\end{definition}

\emph{Example 4.}
Let $\rho:\pi_1(\Sigma_g)\rightarrow G = \Isom^+(\mathbb{H}^3) \cong SO^+(3,1)$ be a Fuchsian representation and let $\Gamma = \rho(\pi_1(\Sigma_g))$ with standard generators. We have $\Gamma$ is purely loxodromic preserving a totally geodesic $\mathbb{H}^2\hookrightarrow\mathbb{H}^3$, so in particular $A_1$ is loxodromic without rotation. We have $C_G(A_1) \cong \mathbb{R}_+ \oplus SO(2)$ where the $\mathbb{R}_+$ factor corresponds to the 1-parameter group of non-rotating loxodromics containing $A_1$, and the $SO(2)$ factor correponds to the elliptic elements  having the same fixed points (2 points in $S^2$) as $A_1$. Indeed deforming $\Gamma$ using the $\mathbb{R}_+$ factor corresponds to an earthquake, and using the $SO(2)$ factor corresponds to a grafting along the free homotopy class of $\rho^{-1}(A_1)$ which is represented by a non-separating simple closed loop on the surface. The same analogy works for the case of earthquake/grafting along a separating simple closed loop $\rho^{-1}(\prod_{i=1}^k[A_i,B_i])$.

\bigskip

\noindent {\it Example 3 (continued).} 
In the example constructed, $A_i$ are non-rotating loxodromic generators. In particular, $\mathcal{A}_{25}\stackrel{A_7}{\longrightarrow}\mathcal{A}_{27}$ is non-rotating loxodromic. So upto a conjugation of the whole surface group, $A_7$ acts as a scaling on $\mathbb{R}^3$: $A_7\vec{x} = \lambda\vec{x}$. Let $P'$ be the image of $P$ under this conjugation, so $\mathcal{A}'_{25}$ and $\mathcal{A}'_{27}$ are ``concentric''. So $C_G(A_i) \cong \mathbb{R}_+\oplus SO(3)$. Consider an algebraic earthquake using the $SO(3)$ part of $C_G(A_7)$: we compose $B_7$ with such a rotation. For small rotation angles, the deformation of $\Gamma$ can be realized geometrically as a deformation of the fundamental domain $P'$: rotating $\mathcal{A}'_{26}$ by Euclidean rotations fixing the Euclidean center of $\mathcal{A}'_{25} $. We can rotate $\mathcal{A}_{26}$ until it is tangent to another face, this represent a path in the space of representations to a group with accidental parabolic. Proceeding ``past'' this point of degeneration gives us non-discrete surface group.

\section{Bounding the Euler number}
\label{secEulernumber}

The goal of this section is to bound the Euler number of a circle bundle which admits flat conformal structures. The first approach is by using a fundamental domain and applying theorem \ref{theoremGammaEuler}. The second approach relies on the formulation of the Euler number of a disc bundle over a surface as the self-intersection number of a section which may be constructed to be piecewise geodesic. 

\subsection{Fundamental domain approach}

\begin{definition}
Let $X$ be a metric space. For $x\in X, \varepsilon>0$, we denote $B_\varepsilon(x)= \{y\in X\ |\ d(x,y)<\varepsilon \}$ which we call a \emph{ball} of radius $\varepsilon$ centered at $x$. For a subset $S\subset X$, we denote $N_\varepsilon(S) = \bigcup_{x\in S} B_\varepsilon(x)$ which is called the $\varepsilon$-neighborhood of $S$.
\end{definition}
We now prove the following:
\begin{theorem}
With the same hypothesis as in theorem \ref{theoremGammaEuler}, we get $|e(E)|<\frac{3}{2}n^2$ where $n$ is the number of faces of the fundamental polyhedron.
\end{theorem}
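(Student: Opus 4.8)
The plan is to compute $e(E)$ as a winding number and to bound it crudely, face by face. By Theorem \ref{theoremGammaEuler} we have $e(E)=[\gamma]\in\pi_1(P)\cong\mathbb{Z}$, where $\gamma=\gamma_1\cdots\gamma_n$ is the circle-arc loop on $\partial P$ and the generator of $\pi_1(P)$ is a fiber of the natural $S^1$-fibration (for instance an edge circle $E_1$, which is the fiber over a vertex of $Q$). Since $P$ is a solid torus, I would realize this class as an intersection number: if $D$ is a meridian disk of $P$, so that $[\partial D]$ is trivial in $\pi_1(P)$ and a longitude meets $D$ once, then $e(E)=\gamma\cdot D$, the signed count of crossings of $\gamma$ with $D$. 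Equivalently, fixing a global fiber coordinate $\Theta\colon\partial P\to S^1$ coming from the trivialization of the bundle over the disk $Q$, the number $e(E)$ is the total fiber winding $\tfrac{1}{2\pi}\oint_\gamma d\Theta$.

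The first, easy, contribution is the local one. On each face $\mathcal{A}_i$ the global coordinate $\Theta$ restricts to a fiber coordinate for the natural $S^1$-fibration of $\mathcal{A}_i$, so it differs from the intrinsic angular coordinate $\theta_i$ of the M\"obius annulus only by a function of the radial (base) variable. Along the arc $\gamma_i$ the intrinsic part contributes exactly $2\pi\,w(\gamma_i)$, and Lemma \ref{remLem1} gives $|w(\gamma_i)|\le\tfrac12$; summed over the $n$ faces this accounts for at most $n/2$ of the winding. The remaining, and dominant, contribution is the \emph{framing twist}: the discrepancy between the intrinsic coordinates $\theta_i$ on adjacent faces that we are forced to absorb each time we re-coordinatize $\Theta$ at one of the $n$ edges in order to keep it globally continuous around $\partial Q$.

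So the main work, and the main obstacle, is to bound the total framing twist over the $n$ edges. I would do this by building an explicit reference meridian $\mu=\partial D$: on each face take a circle arc transverse to the fibers (a ``radial'' arc) and string these together around $\partial P$ into a closed loop, using Lemma \ref{remLem1} once more to bound how far each arc can wind, while the mismatch at each edge $E_i$ is controlled by how the two adjacent natural fibrations meet along the common boundary circle. Assembling the per-edge estimates with the half-integer winding bound then yields $|e(E)|=|\gamma\cdot D|<\tfrac32 n^2$, the factor $n^2$ appearing as the $n$ faces weighted against an $O(n)$ transition at each edge. The delicate point, and where I expect the real difficulty to lie, is the uniform $O(n)$ bound on the per-edge coordinate transition; this is exactly where the hypotheses that the faces are M\"obius annuli and that $P$ is a cornerless fundamental domain should enter, together with the neighborhood estimates $N_\varepsilon$ used to keep the reference meridian's winding under control.
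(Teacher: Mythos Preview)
Your decomposition into intrinsic winding plus framing twist is sensible in spirit, but the proposal stops precisely where the content of the theorem begins. The intrinsic contribution $\sum_i w(\gamma_i)$ is indeed bounded by $n/2$ via Lemma~\ref{remLem1}, but this is negligible; essentially all of the Euler number is carried by what you call the framing twist, and you offer no mechanism to bound it. Your assertion that each edge contributes an $O(n)$ transition is unmotivated --- in fact, if you normalize the intrinsic angular coordinates $\theta_i$ so that $\theta_{i-1}=\theta_i$ on the successive shared edges $E_2,\ldots,E_n$, the framing twist collapses to a single monodromy term at $E_1$, measuring how far the natural framing of $\partial P$ fails to extend over the meridian disk. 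Bounding that monodromy \emph{is} the problem, and nothing in your outline (the M\"obius annulus hypothesis, cornerlessness, or an unspecified $N_\varepsilon$ estimate) gives any traction on it. As written, the argument proves only $|e(E)|\le \tfrac{n}{2} + (\text{an uncontrolled term})$.

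The paper's argument is entirely different and sidesteps the intrinsic framing question. It constructs a second piecewise-circular loop $\beta\subset S^3\setminus P$ running parallel to $\gamma$ through thin thickened annuli $\mathbf{A}_i$ just outside each face, so that $\beta$ generates $H_1(S^3\setminus P)$ and hence $e(E)=\pm\,lk(\gamma,\beta)$. The bound then comes from a planar link diagram: both $\gamma$ and $\beta$ consist of $n$ circle arcs, a generic projection sends each arc into an ellipse, and a pair of ellipses meets in at most four points with at most three crossings of any given sign. Summing over the $n^2$ pairs of arcs and halving for the linking-number formula yields $|e(E)|<\tfrac{3}{2}n^2$. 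The idea you are missing is to externalize the computation as a linking number in $S^3$, where crude algebraic-curve crossing bounds on conics replace any need to control the framing of $\partial P$ directly.
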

\begin{proof}

Let $\gamma$ be the piecewise-circle-arc loop on $\partial P$ as constructed in theorem \ref{theoremGammaEuler}. The strategy is to construct a nearby piecewise-circle-arc loop $\beta \in S^3 - P$, so that the Euler number $e(E)$ can be computed (up to a sign) as a linking number $lk(\gamma,\beta)$.

First, let's construct $\beta$. As before, let $\mathcal{A}_i,...,\mathcal{A}_n$ be the faces of $P$, and let $E_1,...,E_n$ be the edges of $P$ such that $\mathcal{A}_i$ contains $E_i, E_{i+1}$ for $i\in\mathbb{Z}/n\mathbb{Z}$. Let $B_i$ be the bisecting 2-sphere between $\mathcal{A}_i$ and $\mathcal{A}_{i-1}$, note that $B_i$ contains $E_i$.

\begin{figure}[h]
\centering
\includegraphics[width=7cm]{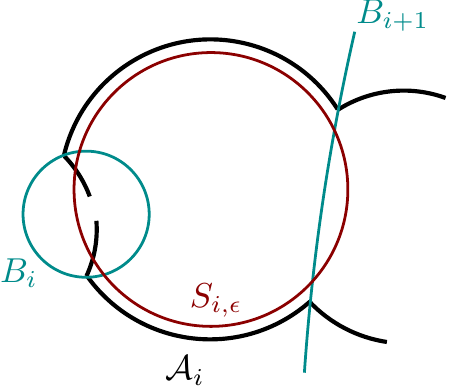}
\caption{Illustration of the thickened annulus ${\bf A_i }$}
\label{fig:ThickenedFace}
\end{figure}

Let $S_i$ be the 2-sphere containing the M\"obius annulus $\mathcal{A}_i$. We know that each 
$S_i = \{x\in S^3\ |\ d_S(x,x_i) = r_i \}$, a metric 2-sphere, where $x_i\in S^3, r_i\in\mathbb{R}_{>0}$ and $d_S$ is the standard spherical metric. Also, $S_i = \exp \left(S(0,r_i)\right) $ where $S(0,r_i)$ is a sphere of radius $r_i$ centered at $0$ in  $T_{x_i}S^3$. If $p\in int(\mathcal{A}_i)\subset S_i$ such that $p=\exp(v)$ for $v\in T_{x_i}S^3$, we have for small $\epsilon>0$, $\exp((1+\epsilon)v)$  is either in $int(P)$ or not in $P$. If $\exp((1+\epsilon)v)\in int(P)$, we let $S_{i,\epsilon} =  \exp \left(S(0,(1-\epsilon)r_i)\subset T_{x_i}S^3\right)$ a sphere slightly ``smaller" than $S_i$. Otherwise, we let $S_{i,\epsilon} =  \exp \left(S(0,(1+\epsilon)r_i)\subset T_{x_i}S^3\right)$. We choose $\epsilon$ small enough so that $S_{i,\epsilon}$ intersects $B_i$ and $B_{i+1}$ transversely.

For each annulus $\mathcal{A}_i$, there is an associated thickened annulus ${\bf A_i }$ in $S^3 - P$. More specifically, ${\bf A_i}$ is a polyhedron bounded by $S_i, S_{i,\epsilon}, B_i, B_{i+1}$. Since $P$ has finitely many faces, we can choose $\epsilon$ small enough so that the interior $int({\bf A_i })$ is contained in  $S^3 - P$ for all $i$. Let $F_{i,0}$ and $F_{i,1}$ be the boundary pieces of ${\bf A_i }$ which are contained the the spheres $B_i, B_{i+1}$. These are closed annuli. Let $int(F_{i,0}), int(F_{i,1})$ be the interior of these annuli as subset of $B_i, B_{i+1}$ respectively. The polyhedron ${\bf A_i }$ is a thickened M\"obius annulus, it has the property that for any two points $q_i\in int( F_{i,0})$ and $q_{i+1}\in int(F_{i,1})$ can be connected by a circle arc lying completely in $int({\bf A_i })$. 

For $i\in\mathbb{Z}/n\mathbb{Z}$ we choose $q_i\in  int( F_{i,0}) \cap  int( F_{i-1,1})$. We can connect $q_i, q_{i+1}$ by a circle arc $\beta_i$ such that $\beta_i\subset int({\bf A_i })\subset S^3 - P$. Thus $\beta =\beta_1,...,\beta_n$ is a loop composed of $n$ circle arcs. Moreover, $\beta$ generates $H_1(S^3 -P)$.

By theorem \ref{theoremGammaEuler} we have $e(E) = [\gamma]\in \pi_1(P)\cong H_1(P)$. Consider $S^3 - \beta \supset P$, there is a deformation retract $S^3 - \beta$ to $P$, thus $H_1(S^3 - \beta)\cong H_1(P)$ and we have $e(E) = \pm [\gamma] \in H_1(S^3 - \beta)$. Therefore $e(E) = \pm lk(\gamma,\beta)$.

Let us consider now the linking number computation by a planar link diagram which can be obtained by projecting $\gamma,\beta$ to a generic plane in $\mathbb{R}^3\subset S^3$. Each circle arc belongs to some circle; a pair of circles is projected to a pair of ellipses. Consider the crossings between these two ellipses, there are at most three $(+)$ crossings and at most three $(-)$ crossings. Both $\gamma,\beta$ are composed of $n$-circle arcs, thus in the link diagram of $\gamma,\beta$, there are at most $\frac{3}{2}n^2$ crossings having the same sign. So $e(E)\le \frac{3}{2}n^2$. Equality cannot be achieved since we can always find segments $\gamma_i$ and $\beta_j$ and a generic projection so that their projected images cross no more than twice. Therefore we have  $e(E)<\frac{3}{2}n^2$.
\end{proof}

We will now provide comments on the possibility of a linear bound. Note that in the construction of $\beta$ above, we can choose its vertices $q_1,...,q_n$ arbitrarily close to the vertices $p_1,...,p_n$ of $\gamma$. Consider $\gamma,\beta$ as loops in $\mathbb{R}^3$. Replacing each circle arc segment of $\gamma$ and $\beta$ with a straight segment we obtain (Euclidean) polygonal unknots $\gamma'$ and $\beta'$ with the property that $|lk(\gamma',\beta') - lk(\gamma,\beta)|\le 2n$. So in the interest of establishing a linear bound for $lk(\gamma,\beta)$, we can work with piecewise linear unknots instead. To show a linear bound on $e(E)$, it suffices to prove the following:
\begin{conjecture}
\label{conjectureLinking}
Let $\gamma$ be a piecewise linear unknot with vertices $p_i,...,p_n$. Let $\varepsilon>0$ be small enough such that $N_\varepsilon(\gamma)$ is a tubular neighborhood. Then there is a constant $c$ such that for every choice of $q_i\in B_\varepsilon(p_i)$, the piecewise linear unknot $\beta$ constructed by connecting $q_1,...,q_n$ (in order) has the property: $lk(\gamma,\beta)<cn$.
\end{conjecture}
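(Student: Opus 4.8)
The plan is to reinterpret $lk(\gamma,\beta)$ as a framing of the unknot $\gamma$ and then to separate the ``geometric'' and ``combinatorial'' contributions to it via White's formula $lk=\mathrm{Wr}+\mathrm{Tw}$. First I would verify that $\beta$ actually lies inside the solid torus $N_\varepsilon(\gamma)$: parametrizing $\gamma$ and $\beta$ synchronously over each edge, the point $(1-t)q_i+tq_{i+1}$ of $\beta$ lies within distance $(1-t)\varepsilon+t\varepsilon=\varepsilon$ of the corresponding point $(1-t)p_i+tp_{i+1}$ of $\gamma$, so each segment $\beta_i$ stays within $\varepsilon$ of the segment $\gamma_i\subset\gamma$. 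Hence $\beta$ is a pushoff of $\gamma$ in its tubular neighborhood, and $lk(\gamma,\beta)$ is exactly the integer framing that this pushoff determines relative to the Seifert framing (the zero-framing) of $\gamma$.

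Writing the framing field $v(t)=\beta(t)-\gamma(t)$, White's formula reads $lk(\gamma,\beta)=\mathrm{Wr}(\gamma)+\mathrm{Tw}(\gamma,v)$, where $\mathrm{Wr}(\gamma)$ is the Gauss self-linking integral of $\gamma$ alone and $\mathrm{Tw}$ measures the rotation of $v$ about the tangent of $\gamma$. The twist term is the tractable one, and I would bound it by $O(n)$ directly from the piecewise-linear structure. On each edge the tangent is a fixed direction $T$, and $v(t)$ interpolates linearly between $v_i=q_i-p_i$ and $v_{i+1}=q_{i+1}-p_{i+1}$; the orthogonal projection of a straight segment into the normal plane of $T$ is again a straight segment, whose argument changes by at most $\pi$ in absolute value, so each edge contributes at most half a turn of twist. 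Together with a uniformly bounded contribution at each of the $n$ corners this gives $|\mathrm{Tw}(\gamma,v)|<Cn$ for a universal constant $C$, uniformly over all admissible choices of the $q_i$.

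It is worth noting what this already buys. For a \emph{fixed} $\gamma$ the quantity $\mathrm{Wr}(\gamma)$ is a constant, so the twist estimate shows that $lk(\gamma,\beta)$ varies by at most $O(n)$ as the $q_i$ range over $B_\varepsilon(p_i)$. The genuine content of the conjecture is therefore a bound \emph{uniform in} $\gamma$ (a universal $c$, as demanded by the application to $e(E)$), and by the decomposition this is equivalent to the purely one-curve statement that the writhe of a piecewise-linear unknot with $n$ edges satisfies $|\mathrm{Wr}(\gamma)|<C'n$.

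This last estimate is the step I expect to be the main obstacle, and it is precisely where unknottedness must enter essentially. A generic projection of $\gamma$ can have on the order of $n^2$ crossings, so counting crossings without regard to sign only recovers the quadratic bound already proved; one must instead show that the signed crossings nearly cancel. The qualitative reason they should is that an unknot admits no reduced diagram all of whose crossings share a single sign, so the $+$ and $-$ crossings of any diagram of $\gamma$ are forced to cancel in bulk; the difficulty is turning this into the quantitative statement $|\mathrm{Wr}|=O(n)$. I would attempt it by replacing the generic projection with one adapted to $\gamma$ — for instance a direction making $\gamma$ monotone outside a controlled family of turning regions — so that the signed crossing count is governed by the number of such regions and hence by $n$. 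The honest risk is that the self-linking of a \emph{thick} unknot may fail to be controlled by edge number alone in the worst case; should that happen, the conjecture would have to be supplemented either by a further geometric hypothesis on $\gamma$, or by an argument that the particular unknots occurring on $\partial P$ (rather than arbitrary ones) carry linearly bounded writhe.
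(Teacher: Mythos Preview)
The statement is a \emph{Conjecture} in the paper, not a proved result; the author presents it as open, remarking only that its truth would upgrade the quadratic bound $|e(E)|<\tfrac{3}{2}n^{2}$ to a linear one. There is therefore no paper proof to compare your attempt against.

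On its own merits: your reduction via White's formula $lk=\mathrm{Wr}+\mathrm{Tw}$ is sound. The verification that $\beta\subset N_\varepsilon(\gamma)$ is correct, and your $O(n)$ bound on the twist is essentially right (on each edge the normal projection of $v$ is a line segment, contributing at most half a turn; corner contributions are uniformly bounded). This does reduce the conjecture to the one-curve statement $|\mathrm{Wr}(\gamma)|<C'n$ for piecewise-linear unknots with $n$ edges. But you do not prove that writhe bound --- your projection-based sketch is only heuristic, and you yourself flag it as the main obstacle and a genuine risk. So what you have is a valid reduction, not a proof. It is worth noting that the author, in material drafted in the source but suppressed from the final text, reaches the identical endpoint by a slightly different route: proving directly that $|lk(\gamma,\beta)-lk(\gamma,\beta')|\le 3n$ for any two admissible pushoffs, exhibiting a particular pushoff with $lk(\gamma,\beta)=\mathrm{Wr}(K(\gamma,\pi))$ for a chosen diagram, and concluding that it would suffice to bound $\min_{\pi}\mathrm{Wr}(K(\gamma,\pi))$ linearly in $n$ --- and then leaving that open. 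Your proposal and the paper thus agree precisely on where the difficulty lies; neither resolves it, which is why the statement remains a conjecture.
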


\subsection{Self-intersection number approach}
This subsection present an independent approach to bounding the Euler number of a (4-dimensional) hyperbolic disc bundle over a closed surface. Of course these manifolds are closely related to conformally flat circle bundles considered earlier. Indeed, quasi-Fuchsian convex cocompact surface groups $\Gamma\subset SO(4,1)$ produce both a hyperbolic disc bundle and a conformally flat circle bundle as quotient geometric manifolds.

Slightly changing the notation we let $E$ be a disc bundle over a closed surface $\Sigma_g$ with a complete hyperbolic structure $E = \mathbb{H}^4 / \Gamma$. The Euler number of this bundle, denoted by $e(E)$, is then the self-intersection number of a section $\Sigma_g\stackrel{s_0}{\hookrightarrow} E$ which can be grasped geometrically. We can combinatorially construct a section as follows:

By the $(\textnormal{dev},\rho)$ pair we can identify $\tilde{E}\cong \mathbb{H}^4$ and $\pi_1(\Sigma_g)\cong\tilde\Gamma$. We choose a standard generator set 
$$\Gamma =\langle A_1, B_1,..., A_g, B_g\ |\ \prod[A_i,B_i] = 1 \rangle.  $$
\begin{definition}
\label{partialWords}
We define the \emph{partial words} of  $\prod_{i=1}^g[A_i,B_i]$ to be $W_1=A_1, W_2=A_1B_1, W_3 = A_1B_1A_1^{-1}, ..., W_{4g} = \prod[A_i,B_i] = 1$.
\end{definition}

\begin{figure}[h]
\centering
\includegraphics[width=5.5cm]{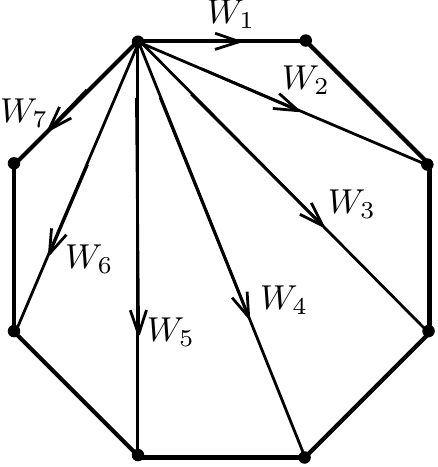}
\caption{Combinatorial picture of the partial words.}
\label{fig:PartialWords}
\end{figure}

Pick a point $x_0\in E$ and a representative $\tilde{x_0} \in \mathbb{H}^4$. Let $W_1,...,W_{4g}$ be the partial words of $\prod[A_i,B_i]$, we can connect the points $W_1 \tilde{x_0}, W_2 \tilde{x_0}, ..., W_{4g} \tilde{x_0} = \tilde{x_0}$ by geodesics in $\mathbb{H}^4$ that form a triangulation of a $4g$-gon into $4g-2$ triangles. Moreover there is a unique geodesic plane containing given 3 points in $\mathbb{H}^4$, so we have $4g-2$ geodesic triangles with vertices at $W_1 \tilde{x_0}, W_2 \tilde{x_0}, ..., W_{4g} \tilde{x_0}$. Under $\mathbb{H}^4 \rightarrow \mathbb{H}^4/\Gamma$ these triangles descend down to geodesic triangles in $E$ based at $x_0$ which continuously form a surface. Let $s_{x_0}:\Sigma\rightarrow E$ be a continuous (piecewise immersion) map whose image consist of these triangles.
\begin{definition}
Given a choice of vertex $x_0\in E$, the piecewise immersion $s_{x_0}: \Sigma_g \rightarrow E$ constructed above is well-defined up to homeomorphisms on $\Sigma_g$ homotopic to the identity. We call this a \emph{standard $4g-2$ polygonal map} of surface into $E$. We let $\tilde{s}_{\tilde{x}_0}: \widetilde{\Sigma}_g \rightarrow \mathbb{H}^4$ be the $\Gamma$-equivariant lift based at $\tilde{x_0}$.
\end{definition}

By construction $s_{x_0}$ induces an isomorphism $\pi_1(\Sigma_g)\stackrel{s^*_{x_0}}{\rightarrow}\pi_1(E)$, so it is homotopic to a section of the disc bundle $E$. We don't yet know for sure if $s_{x_0}$ is regularly homotopic (homotopic through immersions) to a section. If we know this then we can safely compute the Euler number of $E$ as the self-intersection of $s_{x_0}$. One possible avenue to show this is to apply Hirsch's theorem in \cite{Hirsch59Immersions} which guarantees the existence of such a regular homotopy.

For now we will assume that $s_{x_0}$ is an embedding which means it is a section. An approach to bound $e(E)$ suggested to the author by Feng Luo is as follows. The section described above is determined by the choice of an initial point $\tilde{x_0}\in\mathbb{H}^4$. We can move $\tilde{x_0}$ slightly and get a different section and hope that the two sections only intersect transversely, and since each geodesic triangle transervely intersects another at no more than one point, we would have a rough bound $e(E)< (4g-2)(36g -23)$. This number comes from the $4g -2$ geodesic triangles in $\mathbb{H}^4$, and each triangle may possibly intersect $36g-23$ triangles around it if its vertices in $\mathbb{H}^4$ are perturbed.

We observe that there are $4$ dimensions of freedom in choosing the base point which determines the standard $4g-2$ polygonal section. However, the space of all geodesic planes intersecting a given plane non-transversely has dimension $\ge 4$. So a non-trivial problem here is to show the existence of another standard $4g-2$ polygonal section transverse to the original one.

\begin{definition}
Let $A\in\Isom^+(\mathbb{H}^4) =\Mob^+(S^3)$ be a rotating loxodromic transformation with rotation angle not $2k\pi$ for some integer $k$. Then there is a circle $C$ in $S^3$ containing both fixed points and $C$ is invariant under $A$. We call $C$ the \emph{rotation axis} of $A$. Also, $C$ bounds a complete totally geodesic 2-dimensional plane $H\subset \mathbb{H}^4$. Depending on the context, we also say $H$ is the rotation axis of $A$.
\end{definition}

\begin{lemma}
\label{lemmaRotatingLoxo}
Let $A\in\Isom^+(\mathbb{H}^4) =\Mob^+(S^3)$ be a rotating loxodromic transformation with rotation angle not $k\pi$ for some integer $k$. Then there is a unique $3$-dimensional totally geodesic subspace $\mathbb{H}^3\hookrightarrow\mathbb{H}^4$ that is $A$-invariant.
\end{lemma}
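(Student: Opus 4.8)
The plan is to translate the statement into an eigenvector problem in the Lorentzian model $\mathbb{R}^{4,1}$, where $A$ lies in the identity component $SO^+(4,1)$. First I would recall that the totally geodesic copies of $\mathbb{H}^3$ inside $\mathbb{H}^4$ are exactly the intersections $w^\perp\cap H$, where $H$ is the hyperboloid and $w\in\mathbb{R}^{4,1}$ is a spacelike vector (so $B(w)>0$), determined up to scaling: indeed such a $\mathbb{H}^3$ corresponds to a linear subspace $V$ of signature $(3,1)$, and $V^\perp$ is then a $1$-dimensional spacelike line $\langle w\rangle$. Since $A$ preserves $B$, it preserves the hyperplane $w^\perp$ if and only if it preserves the line $\langle w\rangle$, i.e. $w$ is an eigenvector; and $B(Aw)=B(w)$ forces the eigenvalue to be $\pm1$. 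Thus the $A$-invariant totally geodesic $\mathbb{H}^3$'s are in bijection with the spacelike eigendirections of $A$ having eigenvalue $\pm1$, and the whole lemma reduces to showing there is exactly one such direction.

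Next I would put $A$ into its loxodromic normal form. As $A$ is loxodromic it fixes the two null lines spanned by its attracting and repelling boundary fixed points; on the Lorentzian $2$-plane they span, $A$ acts as a boost with eigenvalues $e^{\pm t}$, $t\neq0$, and on the orthogonal spacelike $3$-space it acts by a rotation $R\in SO(3)$ (the determinant and time-orientation conditions pin this block down to $SO(3)$ rather than $O(3)$). Diagonalizing $R$ over $\mathbb{R}$, I choose an orthonormal spacelike basis $e_2,e_3,e_4$ with $Re_2=e_2$ and $R$ rotating $\langle e_3,e_4\rangle$ by the rotation angle $\theta$ of $A$. In this basis the real spectrum of $A$ on $\mathbb{R}^{4,1}$ is: the positive reals $e^{t},e^{-t}$ on the null eigenvectors $e_0\pm e_1$, the eigenvalue $1$ on the spacelike vector $e_2$, and the complex pair $e^{\pm i\theta}$ on $\langle e_3,e_4\rangle$.

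Then existence and uniqueness can be read off directly. The null eigenvectors are not spacelike and contribute nothing; the vector $e_2$ is spacelike with eigenvalue $1$, giving the invariant hyperplane $V=e_2^\perp$, which contains $e_0\pm e_1$ and hence the geodesic axis of $A$, as expected. For uniqueness I must check that $e_2$ is the \emph{only} spacelike $\pm1$-eigendirection. An eigenvalue $-1$ would require $e^{\pm i\theta}=-1$, i.e. $\theta\equiv\pi\pmod{2\pi}$, which is excluded by the hypothesis $\theta\neq k\pi$; and for $t\neq0$, $\theta\not\equiv0$, the eigenvalue-$1$ eigenspace is exactly $\langle e_2\rangle$, one-dimensional. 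Hence there is a single spacelike $\pm1$-eigendirection, so $\mathbb{H}^3=V\cap H$ exists and is unique.

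The hypothesis enters in exactly one place, and that is the crux of the argument: it is precisely the condition $\theta\not\equiv\pi$ that rules out a spacelike $(-1)$-eigenspace. When $\theta=\pi$ the rotation block degenerates to $-\mathrm{Id}$ on $\langle e_3,e_4\rangle$, producing a whole projective line of spacelike $(-1)$-eigendirections and hence infinitely many invariant $\mathbb{H}^3$'s, so uniqueness genuinely fails there; this shows the hypothesis is sharp. The only remaining points requiring care are routine once the normal form is established: verifying that the orthogonal block is in $SO(3)$ and that the angle $\theta$ is the well-defined rotation-angle invariant referred to in the statement.
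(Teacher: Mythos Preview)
Your argument is correct and takes a genuinely different route from the paper. The paper works on the conformal boundary: after conjugating so that $A(x)=\lambda R(x)$ on $\mathbb{R}^3=S^3\setminus\{\infty\}$ with $R\in SO(3)\setminus\{I\}$, an invariant totally geodesic $\mathbb{H}^3$ corresponds to an $A$-invariant $2$-sphere in $S^3$; since such a sphere must contain the fixed points $0,\infty$, it is a Euclidean plane through the origin, and the only $R$-invariant such plane is the one orthogonal to the rotation axis of $R$. You instead work in the Lorentzian model $\mathbb{R}^{4,1}$, translating the problem into finding spacelike $\pm1$-eigendirections of $A\in SO^+(4,1)$, and read off existence and uniqueness from the block normal form.

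Each approach has its virtues. The paper's argument is shorter and more visual, but the hypothesis $\theta\neq k\pi$ is used only implicitly in the sentence ``there's only one such plane invariant under $R$,'' with the explanation deferred to a remark after the proof. Your eigenvalue argument makes the role of the hypothesis completely transparent inside the proof itself: $\theta\not\equiv\pi$ is exactly what kills the spacelike $(-1)$-eigenspace, and $\theta\not\equiv0$ (already part of ``rotating'') is what prevents the $1$-eigenspace from being larger than $\langle e_2\rangle$. Your closing observation that at $\theta=\pi$ one gets a full $\mathbb{P}^1$ of spacelike $(-1)$-eigendirections nicely matches the paper's post-proof remark that in that case every $2$-sphere containing the rotation axis is invariant.
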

\begin{proof}
For all $x\in\mathbb{R}^3 = S^3 - \{\infty\}$, we have (up to a conjugation) $A(x) = \lambda R(x)$ for some $\lambda\in\mathbb{R}^+$ and $R\in SO(3)-\{ I\}$. Any 3-dimensional subspace invariant under $A$ corresponds to a 2-sphere in $S^3$ invariant under $A$. This invariant 2-sphere must contain the fixed points: $0,\infty$, so it must be a Euclidean plane through $0$ in $\mathbb{R}^3$. There's only one such plane invariant under $R$ which is the one orthogonal to its rotation axis. Thus there is a unique 3-dimensional totally geodesic subspace invariant under $A$.
\end{proof}
Note that the above is not true if the rotation angle of $R$ is $k\pi$ for any interger $k$. If the rotation angle is $\pi$, then $A$ leaves invariant any 2-sphere in $S^3$ that contains the rotation axis (which is a circle).

\begin{definition}
Let $S_1,...,S_k$ be subsets of $\mathbb{H}^n$. We define $\textnormal{span}(S_1,...,S_k)$ to be the smallest totally geodesic subspace $\mathbb{H}^m\hookrightarrow\mathbb{H}^n$ that contains $S_1,...,S_k$. 
\end{definition}

\begin{lemma}
\label{lemmaDisjointTransverse}
Let $H_1, H_2$ be two geodesic planes in $\mathbb{H}^4$. If span$(H_1, H_2) = \mathbb{H}^4$ then $H_1, H_2$ are either disjoint or intersecting transversely.
\end{lemma}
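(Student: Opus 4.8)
The plan is to pass to the hyperboloid model and reduce the statement to elementary linear algebra in $\mathbb{R}^{4,1}$. Recall (see \cite{Ratcliffe94Book}) that a totally geodesic subspace of $\mathbb{H}^4$ corresponds to a linear subspace of $\mathbb{R}^{4,1}$ containing a timelike vector; so each geodesic plane $H_i\cong\mathbb{H}^2$ corresponds to a $3$-dimensional linear subspace $V_i\subset\mathbb{R}^{4,1}$ with $H_i=\mathbb{H}^4\cap V_i$. First I would record the auxiliary fact that any subspace $W$ containing a timelike vector has $B|_W$ nondegenerate of signature $(\dim W-1,1)$: a vector in the radical of $B|_W$ would be a nonzero null vector that is $B$-orthogonal to a timelike vector, which is impossible since every vector $B$-orthogonal to a timelike vector is spacelike. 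Since $V_1$ and $V_2$ are each Lorentzian, their sum $V_1+V_2$ contains timelike vectors and is therefore itself Lorentzian; being the smallest linear subspace containing $V_1\cup V_2$, it corresponds to the smallest totally geodesic subspace containing $H_1$ and $H_2$. Hence $\textnormal{span}(H_1,H_2)=\mathbb{H}^4\cap(V_1+V_2)$, and the hypothesis $\textnormal{span}(H_1,H_2)=\mathbb{H}^4$ is equivalent to $V_1+V_2=\mathbb{R}^{4,1}$.

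Next I would run the dimension count. From $\dim(V_1+V_2)=\dim V_1+\dim V_2-\dim(V_1\cap V_2)=6-\dim(V_1\cap V_2)$ together with $\dim(V_1+V_2)=5$, I obtain $\dim(V_1\cap V_2)=1$; write $L=V_1\cap V_2$ for this line. Then $H_1\cap H_2=\mathbb{H}^4\cap L$, which is empty when $L$ is spacelike or null and is a single point $p$ when $L$ is timelike. The empty case is exactly the ``disjoint'' alternative, so it remains to treat the case $H_1\cap H_2=\{p\}$ and show the intersection is transverse at $p$.

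Finally, for transversality at $p$ I would identify tangent spaces with $B$-orthogonal complements: $T_p\mathbb{H}^4=p^\perp$ is the $4$-dimensional spacelike complement, and $T_pH_i=V_i\cap p^\perp$, which is $2$-dimensional since $p\in V_i$ but $p\notin p^\perp$ (as $B(p,p)=-1\neq 0$). Transversality means $T_pH_1\cap T_pH_2=\{0\}$, and indeed $T_pH_1\cap T_pH_2=(V_1\cap V_2)\cap p^\perp=L\cap p^\perp=\mathbb{R}p\cap p^\perp=\{0\}$, again because $p\notin p^\perp$. Thus the two $2$-dimensional subspaces meet trivially inside the $4$-dimensional $p^\perp$, so they span it and the intersection is transverse, proving the lemma.

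I expect the only genuinely delicate point to be the first step, namely verifying that the geometric notion $\textnormal{span}$ coincides with the linear span $V_1+V_2$; this rests on the signature/nondegeneracy fact that a subspace containing a timelike vector is automatically Lorentzian of the correct signature. Once that identification is secured, the remainder is a routine dimension count followed by the one-line orthogonality computation for transversality.
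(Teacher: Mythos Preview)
Your argument is correct. Both proofs rest on the same dimension count, but the paper compresses it into a one-line contrapositive: if $H_1$ and $H_2$ intersect non-transversely then (being totally geodesic) they share at least a geodesic, so $\textnormal{span}(H_1,H_2)$ has dimension at most $2+2-1=3$. You instead argue directly in the hyperboloid model, translating $\textnormal{span}(H_1,H_2)=\mathbb{H}^4$ into $V_1+V_2=\mathbb{R}^{4,1}$, forcing $\dim(V_1\cap V_2)=1$, and then reading off disjointness or single-point transverse intersection from the type of that line. Your version is longer but makes explicit the step the paper leaves implicit, namely that a non-transverse intersection of totally geodesic planes must contain a full geodesic; the paper's version is quicker for a reader who already has that fact internalized.
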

\begin{proof}If $H_1, H_2$ intersect non-transversely (along a plane or a geodesic), then span$(H_1, H_2)$ is at most 3-dimensional.  \end{proof}

\begin{lemma}
\label{lemmaDeformX0}
Suppose we have a surface group $\Gamma\subset G = \Isom^+(\mathbb{H}^4)$, and a choice of generators and partial words as in Definition \ref{partialWords} which are all loxodromic. Let $A,B,P,Q$ be such loxodromic transformations. Then there is a point $x_0'$ arbitrarily close to $x_0$ and $x_1'$ arbitrarily close to $x_1$ so that $\textnormal{span}(x'_0, Ax'_0, Bx'_0)$ is either disjoint from $ \textnormal{span}(x'_1, Px'_1, Qx'_1)$ or intersecting $ \textnormal{span}(x'_1, Px'_1, Qx'_1)$ transversely.
\end{lemma}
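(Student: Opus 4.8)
The plan is to transport the statement into the Minkowski model and reduce it to a single non-degeneracy fact. By Lemma \ref{lemmaDisjointTransverse}, two geodesic planes in $\mathbb{H}^4$ are automatically disjoint or transverse once their joint span is all of $\mathbb{H}^4$; so it suffices to find $x_0'$ near $x_0$ and $x_1'$ near $x_1$ with
\[
\textnormal{span}\bigl(\textnormal{span}(x_0',Ax_0',Bx_0'),\ \textnormal{span}(x_1',Px_1',Qx_1')\bigr)=\mathbb{H}^4 .
\]
Working in the hyperboloid $H\subset\mathbb{R}^{4,1}=\mathbb{R}^5$, totally geodesic subspaces correspond to linear subspaces and $\textnormal{span}$ of points to the linear span of their representing vectors, so (writing $x_0',x_1'$ also for the corresponding vectors) the displayed equality says precisely that the six vectors $x_0',Ax_0',Bx_0',x_1',Px_1',Qx_1'$ span $\mathbb{R}^5$, i.e. that the $5\times 6$ matrix $M=M(x_0',x_1')$ with these columns has rank $5$. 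A generic perturbation also keeps each triple linearly independent, so that both spans remain genuine $2$-planes.

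The rank condition is open and analytic, which disposes of everything except one configuration. The locus $\mathcal{U}\subset H\times H$ on which $\textnormal{rank}\,M=5$ is open, and its complement is the common zero set of the maximal ($5\times5$) minors of $M$, each a real-analytic function of the coordinates of $(x_0',x_1')$. As $H\times H$ is connected and real-analytic, the zero set of any minor that is not identically zero has empty interior; hence as soon as one maximal minor is not identically zero, $\mathcal{U}$ is dense and we may push the given pair $(x_0,x_1)$ into it. Thus it is enough to exhibit a single pair $(x,y)\in H\times H$ with $\textnormal{rank}\,M(x,y)=5$, where now $x,y$ may be chosen freely.

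To build one such configuration, fix $x$ so that $V_1:=\textnormal{span}(x,Ax,Bx)$ is a genuine $3$-dimensional subspace (this is automatic from the non-degeneracy of the geodesic triangles in the standard polygonal section, and holds for generic $x$ whenever $A\ne B$ are loxodromic). Let $\pi\colon\mathbb{R}^5\to\mathbb{R}^5/V_1\cong\mathbb{R}^2$ be the quotient. Then $\textnormal{rank}\,M(x,y)=5$ holds as soon as $\pi(y)$ and $\pi(Py)$ are linearly independent for some $y$, i.e. as soon as the quadratic form $y\mapsto\pi(y)\wedge\pi(Py)$ is not identically zero; a short computation shows this form vanishes identically precisely when $P$ preserves $V_1$ and acts as a scalar on $\mathbb{R}^5/V_1$. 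So the entire statement comes down to producing one $x$ for which $V_1(x)$ is \emph{not} $P$-invariant.

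This last point is the main obstacle, and it is where loxodromicity is essential. The $P$-invariant $3$-planes are rigidly determined by the eigenstructure of $P$: a loxodromic $P$ has eigenvalues $\lambda,\lambda^{-1},1,e^{\pm i\theta}$ with $\lambda\ne 1$, so its real invariant subspaces are assembled from the two null eigenlines, the fixed eigenline, and the rotation $2$-plane, and these form a proper (low-dimensional) subvariety of the Grassmannian. The planes $V_1(x)=\textnormal{span}(x,Ax,Bx)$ genuinely move with $x$ (they cannot all coincide, since no fixed $3$-plane contains every $x$), and I expect the cleanest rigorous check is to evaluate the relevant $5\times5$ determinant at an $x$ chosen off the eigenvectors of $A$ and $P$: the factors $\lambda^{\pm1}\ne 1$ keep $x,Ax$ independent while the rotation part of $P$ supplies the last missing direction, yielding rank $5$. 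Since we need only one non-vanishing determinant, this is a finite computation rather than a case analysis, and it completes the argument through the density statement of the second paragraph.
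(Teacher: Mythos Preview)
Your analytic--density framework is sound and is a genuinely different route from the paper's proof: instead of a geometric case analysis on the rotation angles of $A,B,AB^{-1}$ and on whether the $3$-plane $K=\textnormal{span}(H,x_0,Ax_0,Bx_0)$ is invariant under them, you encode the problem as a rank condition on the $5\times 6$ matrix $M(x_0',x_1')$ and appeal to real-analyticity on the connected manifold $H\times H$. The reduction through Lemma~\ref{lemmaDisjointTransverse} is correct, and your computation that $y\mapsto \pi(y)\wedge\pi(Py)$ vanishes identically exactly when $P$ preserves $V_1$ and acts as a scalar on $\mathbb{R}^5/V_1$ is also correct.

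The gap is in the last paragraph, and it is the crux of the matter. You reduce everything to producing a single $x$ with $V_1(x)=\textnormal{span}(x,Ax,Bx)$ not $P$-invariant, and then argue only heuristically: the $P$-invariant $3$-planes form a low-dimensional subvariety of $Gr(3,5)$, and ``the planes $V_1(x)$ genuinely move with $x$''. But moving is not the same as escaping; nothing you have said rules out that the image of $x\mapsto V_1(x)$ lies entirely inside the $P$-invariant locus. Your proposed fix---``evaluate the relevant $5\times5$ determinant at an $x$ chosen off the eigenvectors of $A$ and $P$''---is not a computation, it is a hope: you have not specified the point, and off-eigenvector alone does not force non-vanishing. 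In fact, ruling out the bad case (that $V_1(x)$ is $P$-invariant with scalar quotient action for \emph{every} $x$) seems to require exactly the kind of eigenstructure analysis of $A,B,P$ that the paper carries out. The paper's case split on rotation angles (non-rotating, angle $\pi$, generic angle) and on invariance of the ambient $3$-space $K$ is doing precisely this work, including one sub-case where $x_1$ must also be moved to displace $H$ off a common rotation axis. So your approach has traded the paper's explicit case analysis for a single existence claim, but that claim is not established, and establishing it likely needs comparable effort. To complete your argument you must either exhibit an explicit $(x,y)$ and verify a $5\times5$ minor is nonzero using only the loxodromic hypothesis, or prove directly that no loxodromic $P$ can leave \emph{every} $3$-plane of the form $\textnormal{span}(x,Ax,Bx)$ invariant with scalar quotient action.
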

\begin{proof}
 Let $\Delta(x_0,Ax_0,Bx_0)$ be the geodesic triangle with vertices $x_0,Ax_0,Bx_0$. The context is that this is a triangle in $\mathbb{H}^4$ which descends to one of the $4g-2$ geodesic triangles immersed in $E$.
Let $H =  \textnormal{span}(x_1, Px_1, Qx_1)$ and let  $K = \textnormal{span}(H, x_0, Ax_0, Bx_0)$. If $K$ is 4-dimensional then we're done: $x'_0 = x_0$. If $K$ is 2-dimensional, then $H = \textnormal{span}(x_0, Ax_0, Bx_0)$. Then we can move $x_0$ an arbitrarily small amount to $x'_0$ outside of $H$ and the problem is reduced to the case when $K$ is 3-dimensional. This is when $H$ and span$(x_0, Ax_0, Bx_0)$ intersect along a geodesic.

Case 1: Suppose $K$ is not invariant under $A$. Then there is a vector $v\in T_{x_0}K$ so that $dA(v)\not\in T_{Ax_0} K$. The condition $dA(v)\not\in T_{Ax_0}K$ is an open condition, so we have the freedom to choose $v$ such that $v\not\in T_{x_0} H$. Then there is $x'_0$ arbitrarily close to $x_0$ along the $v$ direction where we have $x'_0\in K - H$ and $Ax'_0\not\in K$. So $\textnormal{span}(H,x'_0) = K$, thus $\textnormal{span}(H,x'_0, Ax'_0) = \mathbb{H}^4$. Therefore $\textnormal{span}(x'_0, Ax'_0, Bx'_0)$ and $H$ are either disjoint or intersecting transversely.

A similar proof works for the case $K$ is not invariant under either $B$ or $AB^{-1}$.

Case 2: From now we assume that $K$ is invariant under all three transformations $A, B,$ and $AB^{-1}$. Since $K$ is 3-dimensional, it divides $\mathbb{H}^4$ into two half-spaces.

Case 2a: If $A$ is rotating with angle not $k\pi$ for an interger $k$, then by lemma there is a unique 3-dimensional geodesic subspace $S_A$ invariant under $A$. We deform $x_0$ to $x'_0$ outside of both $K$ and $S_A$. Thus $K' = \textnormal{span}(H, x'_0, Ax'_0, Bx'_0)$ is either 4-dimesional (in which case we're done by lemma \ref{lemmaDisjointTransverse}), or a 3-dimensional space distinct from $K$. In the latter case, $A$ cannot preserve $K'$ as well by lemma \ref{lemmaRotatingLoxo}, so we are back in Case 1 which has already been resolved.

The above argument works if either $A, B,$ or $AB^{-1}$  is rotating with angle not $k\pi$.

Case 2b: Now suppose that $A,B, AB^{-1}$ are all rotating with angle $k\pi$ for some integer $k$.

 If $A,B,AB^{-1}$ are all non-rotating, then their action on $K$ is orientation preserving. Pick a vector $v\in T_{x_0}\mathbb{H}^4$ that is orthogonal to $K$. Its images under the differential maps are $dA(v)\in T_{Ax_0}\mathbb{H}^4$ and $dB(v)\in T_{Bx_0}\mathbb{H}^4$. So $v, dA(v), dB(v)$ must point into the same half space of $\mathbb{H}^4 - K$ because $A,B$ are both orientation preserving. We can then find $x'_0$ arbitrarily close to $x_0$ along the direction of $v$, and we have $x'_0, Ax'_0, Bx'_0$ are all in the same half space, and also the distance from $x'_0, Ax'_0, Bx'_0$ to $K$ are the same. Thus $\textnormal{span}(x'_0, Ax'_0, Bx'_0)$ is disjoint from $K$, in particular $\textnormal{span}(x'_0, Ax'_0, Bx'_0)$ is disjoint from $H$.

The last case is when two out of the three transformations $A,B,AB^{-1}$ are rotating with angle $\pi$, the other is non-rotating. Let $C_1,C_2$ be the rotation axes of these two loxodromic with $\pi$-rotation, these are circles in $\partial_\infty \mathbb{H}^4$. Since $K$ is invariant under $A,B,AB^{-1}$ (by Case 2 assumption), we have $\partial_\infty K \cong S^2$ contains the two rotation axes. We can move $x_0$ to $x'_0$ outside of $K$. So we have $K' = \textnormal{span}(H, x'_0, Ax'_0, Bx'_0)$, and $K\cap K' = H$, thus $\partial_\infty K \cap \partial_\infty K' = \partial_\infty H$. If  $\partial_\infty K'$ does not contain the two rotation axes then $K'$ is not invariant under all three transformations $A,B,AB^{-1}$ and we are reduced to Case 1. Otherwise, both $\partial_\infty K, \partial_\infty K'$ contain the two rotation axes, which means the two axes are the same circle and the same as $\partial_\infty H = \partial_\infty K \cap \partial_\infty K'$. Now recall that $H = \textnormal{span}(x_1, Px_1, Qx_1)$. We can move $x_1$ an arbitrarily small amount to $x_1'$ so that $H' = \textnormal{span}(x'_1, Px'_1, Qx'_1)$ is not the rotation axis of either $A$ or $B$ or $AB^{-1}$. Apply the same argument we get $\textnormal{span}(x'_0, Ax'_0, Bx'_0)$ is either disjoint from $H'$ or intersecting $H'$ transversely.

This concludes the proof of lemma \ref{lemmaDeformX0}

\end{proof}

\begin{theorem}
Let $E$ be a disc bundle over a closed surface $\Sigma_g$ with a complete uniformizable hyperbolic structure $E=\mathbb{H}^4 /\Gamma$ where $\Gamma$ is a surface group with a loxodromic standard generator set $A_i, B_i$ and loxodromic partial words $W_1=A_1, W_2=A_1B_1, W_3 = A_1B_1A_1^{-1}, ..., W_{4g} = \prod[A_i,B_i] = 1$. Suppose also that there is a point $x_1\in E$ so that the induced standard $4g-2$ polygonal map $\Sigma_g\stackrel{s_{x_1}}{\rightarrow} E$ is injective, hence a section of $E$.

Then $$e(E)\le(4g-2)(36g-23).$$
\end{theorem}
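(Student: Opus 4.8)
The plan is to realize $e(E)$ as the self-intersection number of the section $s_{x_1}$ and to compute it by intersecting $s_{x_1}$ with a second, nearby polygonal section. Since $E$ is a disc (line) bundle over $\Sigma_g$ and $s_{x_1}$ is a section, $e(E)$ equals the self-intersection $[s_{x_1}]\cdot[s_{x_1}]$, which may be computed as the signed count of transverse intersection points between $s_{x_1}$ and any transverse homologous cycle. I would take this cycle to be the standard $4g-2$ polygonal map $s_{x_0}$ built from a base point $x_0$ near $x_1$: it is constructed in exactly the same way as $s_{x_1}$, so it induces the same isomorphism on $\pi_1$ and is homotopic, hence homologous, to $s_{x_1}$, and for $x_0$ close to $x_1$ it is again an embedding. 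Thus it suffices to make $s_{x_0}$ transverse to $s_{x_1}$ and to bound the number of intersection points.

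First I would produce the transverse pushoff. Lift both maps $\Gamma$-equivariantly to $\mathbb{H}^4$; the lift of $s_{x_1}$ is tiled by the $\Gamma$-translates of the $4g-2$ geodesic triangles $\Delta(\tilde x_1, W_j\tilde x_1, W_{j+1}\tilde x_1)$, and similarly for $s_{x_0}$. An intersection downstairs corresponds to an intersection of one of the $4g-2$ base triangles of $\tilde s_{x_1}$ with a translate $\gamma\,\Delta(\tilde x_0, W_k\tilde x_0, W_{k+1}\tilde x_0)=\Delta(\gamma\tilde x_0,\gamma W_k\tilde x_0,\gamma W_{k+1}\tilde x_0)$ of a triangle of $\tilde s_{x_0}$. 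Writing the three vertices of the latter as $y,\,(\gamma W_k\gamma^{-1})y,\,(\gamma W_{k+1}\gamma^{-1})y$ with $y=\gamma\tilde x_0$, the two spanning planes take the form $\textnormal{span}(z,Az,Bz)$ and $\textnormal{span}(w,Pw,Qw)$ with $A=W_j$, $B=W_{j+1}$, $P=\gamma W_k\gamma^{-1}$, $Q=\gamma W_{k+1}\gamma^{-1}$, all loxodromic as conjugates of the loxodromic partial words. Hence Lemma \ref{lemmaDeformX0} applies to each such pair, providing arbitrarily small perturbations of $x_0$ (and, if necessary, $x_1$) after which the two planes are disjoint or transverse. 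By proper discontinuity only finitely many $\gamma$ place a translate of $\tilde s_{x_0}$ within intersecting distance of the compact union of the $4g-2$ base triangles, so only finitely many pairs $(j,k,\gamma)$ are relevant; since ``disjoint or transverse'' is an open and, by the lemma, dense condition on the base points, a single perturbation realizes all relevant pairs simultaneously while keeping $s_{x_0}$ and $s_{x_1}$ embeddings.

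Next comes the count. Two transverse geodesic $2$-planes in $\mathbb{H}^4$ meet in at most one point (Lemma \ref{lemmaDisjointTransverse} together with the dimension count $2+2-4=0$), so each transverse pair of triangles contributes at most one intersection point and the signed count $e(E)$ is bounded above by the number of intersecting triangle pairs. Because $s_{x_0}$ is $C^0$-close to $s_{x_1}$, each pushoff triangle meeting a fixed base triangle $T_0$ must lie close to $T_0$, so its corresponding triangle of $s_{x_1}$ must touch $T_0$; as distinct non-adjacent triangles of the locally finite tiling are uniformly separated near the compact fundamental region, for $x_0$ close enough the triangles meeting $T_0$ correspond precisely to the triangles of $s_{x_1}$ sharing at least one vertex with $T_0$ (including $T_0$ itself). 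The one-vertex triangulation of $\Sigma_g$ has $V=1$, $F=4g-2$, hence $E=6g-3$; around each vertex lift exactly $3F=12g-6$ triangles meet, each edge lies on exactly two triangles, and all three edges of $T_0$ are present, so inclusion--exclusion over the three vertices of $T_0$ gives $3(12g-6)-3\cdot 2+1=36g-23$ triangles sharing a vertex with $T_0$. Summing over the $4g-2$ base triangles yields $e(E)\le (4g-2)(36g-23)$.

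The main obstacle is the transversality step: Lemma \ref{lemmaDeformX0} handles only one pair of planes at a time and perturbs both base points, so the genuine content is organizing these local perturbations into a single global one. Making this rigorous rests on (i) the finiteness of relevant pairs from proper discontinuity, (ii) the openness of the transverse/disjoint condition, allowing finitely many dense open conditions to be intersected, and (iii) checking that the perturbed maps remain sections. The only other delicate point is the uniform positive-separation estimate ensuring that a $C^0$-small pushoff of $T_0$ can meet only the combinatorially adjacent triangles, which is exactly what pins the per-triangle contribution to the purely combinatorial value $36g-23$.
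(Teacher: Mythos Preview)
Your argument is correct and follows essentially the same route as the paper: build a second standard $4g-2$ polygonal section from a nearby base point, use Lemma~\ref{lemmaDeformX0} to arrange pairwise transversality of the triangle planes, and bound the intersection count combinatorially. The differences are minor. Where the paper invokes the Baire Category theorem over the countable family of ``bad'' sets $S_{i,j}$, you observe (correctly) that proper discontinuity makes only finitely many triangle pairs relevant once the base point is confined to a small ball, so a finite intersection of dense open conditions suffices; this is a slight simplification. You also supply the inclusion--exclusion derivation of $36g-23$ explicitly, which the paper only states. One point to keep clean: since Lemma~\ref{lemmaDeformX0} may require perturbing \emph{both} base points, your perturbation really takes place in the product ball $B_\varepsilon(\tilde x_0)\times B_\varepsilon(\tilde x_1)$; the paper sidesteps this by first choosing $\tilde x_1$ off every rotation axis so that only $\tilde x_0$ needs to move, but your joint-perturbation reading is equally valid.
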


\begin{proof}
Let $x_1\in E$ and a lift $\tilde{x}_1\in\mathbb{H}^4$. As before we can construct $4g-2$ geodesic triangles in $\mathbb{H}^4$ with vertices at  $W_1 \tilde{x_1}, W_2 \tilde{x_1}, ..., W_{4g} \tilde{x_1}$ which descends to a continuous map $s_{x_1}:\Sigma_g\rightarrow E$ which is assumed to be a section. We can choose $x_1$ such that no lift $\tilde{x}_1$ is in the rotation axis of any loxodromic element of $\Gamma$. (We can do this since $\Gamma(\bigcup\textnormal{all rotation axes})$ has measure $0$ in $\mathbb{H}^4$.)  The Euler number is the the self-intersection number$s_{x_1}$. The $4g-2$ geodesic triangle under $\Gamma$ expand to a countable collection of geodesic triangles which we name $\{T_{x_1,i} \}_{i\in\mathbb{Z}^+}$.

We pick $\tilde{x}_0$ arbitraily close to $\tilde{x}_1$ in a way that $\tilde{x}_0$ is not in $T_{1,i}$ for any $i$. For every $\tilde{x}'_0$ in the ball $B(\tilde{x}_0,\epsilon)\subset\mathbb{H}^4$ we can once again construct $\{T_{\tilde{x}'_0,i} \}_{i\in\mathbb{Z}^+}$ a countable collection of geodesic triangles from $\tilde{x}'_0$ and $\Gamma$. Let $s_{\tilde{x}'_0}: \Sigma_g\rightarrow E$ be the piecewise geodesic section based at $\tilde{x}'_0$. We have $\tilde{x}'_0$ is close to $\tilde{x}_0$ which is close to $\tilde{x}_1$, so the sections $s_{x_1}$ and $s_{\tilde{x}'_0}$ are arbitrarily close. If we can make the two sections transverse (by choosing $\tilde{x}'_0)$), then $e(E)$ is the local intersection number between them.

Let $S_{i,j} \subset B(\tilde{x}_0,\epsilon)$ be the set of points $\tilde{x}'_0$ near $\tilde{x}_0$ such that $T_{\tilde{x}'_0, j}$ intersects the fixed triangle $T_{x_1,i}$ non-transversely. By lemma \ref{lemmaDeformX0}, the set $S_{i,j}$ has empty interior. Moreover $S_{i,j}$ is a closed set since disjoint/transverse is an open condition. Therefore by Baire Category theorem, $$S = \bigcup_{i,j\in\mathbb{Z}^+} S_{i,j}$$ has empty interior. So we can choose $\tilde{x}'_0 \in B(\tilde{x}_0,\epsilon) - S$ and we have $T_{x_1,i}$ is disjoint or transverse to $T_{\tilde{x}'_0,j}$ for any $i,j\in\mathbb{Z}^+$. Thus we have two arbitrarily close transverse sections $s_{x_1}$ and $s_{\tilde{x}'_0}$ and therefore $$e(E) = i(s_{x_1},s_{\tilde{x}'_0}) \le (4g-2)(36g-23).$$

\end{proof}

\bibliographystyle{unsrt} 
\bibliography{SonsBib}

\end{document}